\newcommand{\newprob}[1]{\theoremstyle{definition}\newtheorem{#1}[thmctr]{Problem}\theoremstyle{plain}}
\DeclareMathOperator{\rt}{\mathbf{RT}}
\DeclareMathOperator{\diam}{\text{diam}}
\DeclareMathOperator{\ex}{\text{ex}}
\DeclareMathOperator{\tkop}{\mathrm{TK}}
\DeclareMathOperator{\tkfop}{\mathcal{TK}}
\newcommand{\diamt}[1]{d_\text{max}(#1)}
\newcommand{\tk}[2]{\tkop^{#2}(#1)}
\newcommand{\tkf}[2]{\tkfop^{#2}(#1)}
\newcommand{\HH}{\mathcal{H}}
\newcommand{\joziuni}{University of Illinois \\ University of California, San Diego \\ jobal@math.uiuc.edu}
\newcommand{\jozithanks}{This material is based upon work supported by NSF CAREER Grant DMS-0745185,
 UIUC Campus Research Board Grants 09072 and 08086, and OTKA Grant K76099.}
\newcommand{\johnuni}{University of Illinois \\ jlenz2@math.uiuc.edu}
\newcommand{\johnthanks}{Work supported by 2010 REGS Program of the University of Illinois and the
 National Science Foundation through a fellowship funded by the grant DMS  0838434 ``EMSW21MCTP: Research Experience for Graduate Students``.}
\begin{document}

\title{On the Ramsey-Tur\'{a}n numbers of graphs and hypergraphs}
\date{\today}
\author{J\'{o}zsef Balogh \thanks{\jozithanks} \\ \joziuni  \and
       John Lenz \thanks{\johnthanks} \\ \johnuni }

\maketitle

\begin{abstract}
Let $t$ be an integer, $f(n)$ a function, and $H$ a graph. Define the $t$-Ramsey-Tur\'{a}n number of $H$,
$\rt_t(n, H, f(n))$, to be
the maximum number of edges in an $n$-vertex, $H$-free graph $G$ with $\alpha_t(G) \leq f(n)$, where
$\alpha_t(G)$ is the maximum number of vertices in a $K_t$-free induced subgraph of $G$.
Erd\H{o}s, Hajnal, Simonovits, S\'{o}s, and Szemer\'{e}di~\cite{rt-erdos94} posed several open questions
about $\rt_t(n,K_s,o(n))$, among them finding the minimum $\ell$ such that
$\rt_t(n,K_{t+\ell},o(n)) = \Omega(n^2)$, where it is easy to see
that $\rt_t(n,K_{t+1},o(n)) = o(n^2)$.  In this paper, we answer this question by proving that
$\rt_t(n,K_{t+2},o(n)) = \Omega(n^2)$;
our constructions  also
imply several results on the Ramsey-Tur\'{a}n numbers of hypergraphs.
\end{abstract}

\section{Introduction}
Let $\mathcal{H}$ be an $r$-uniform hypergraph and $f(n)$ a function.  The 
\textbf{Ramsey-Tur\'{a}n number of $\mathcal{H}$}, $\rt(n,\mathcal{H},f(n))$, is the
maximum number of edges in an $n$-vertex, $r$-uniform, $\mathcal{H}$-free hypergraph with independence number at most $f(n)$.
In 1970, Erd\H{o}s and S\'{o}s~\cite{rt-erdos70} initiated the study of Ramsey-Tur\'{a}n numbers of graphs
when they started investigating
whether excluding  large independent sets in $K_{s}$-free graphs implies an improvement in Tur\'an's theorem. 
One of the main problems in Ramsey-Tur\'{a}n theory is to determine the threshold function for $\mathcal{H}$ (see~\cite{rt-simonovits01} for a survey).
The \textbf{threshold function} for $\mathcal{H}$ is a function $t(n)$ such that $\rt(n,\mathcal{H},t(n)) = \Omega(n^r)$ and if $f(n) = o(t(n))$ then
$\rt(n,\mathcal{H},f(n)) = o(n^r)$.  Define
\begin{align*}
  \theta(\mathcal{H}) = \lim_{\epsilon \rightarrow 0} \lim_{n \rightarrow \infty} \frac{\rt(n,\mathcal{H},\epsilon n)}{n^r}.
\end{align*}
In an abuse of notation, we
write
$\rt(n,\mathcal{H},o(n)) = \theta(\mathcal{H}) n^r + o(n^r)$.  An easy diagonalization argument shows that $t(n) = n$ is a
threshold function for $\mathcal{H}$ if and only if $\ex(n,\mathcal{H}) = \Omega(n^r)$ and $\theta(\mathcal{H}) = 0$.
Very few threshold functions are known exactly; instead we study the easier problem of deciding whether $t(n) = n$ is
a threshold function or not.

Erd\H{o}s, Hajnal, S\'{o}s, and Szemer\'{e}di~\cite[p. 80]{rt-erdos83} proposed a problem
about an extension of the concept of the Ramsey-Tur\'{a}n numbers of graphs.
Let $G$ be a graph and define the \textbf{$K_t$-independence number} of $G$ as  $$\alpha_t(G):= 
\max \left\{ \left| S \right| : S \subseteq V(G), G[S] \text{ is } K_t \text{-free} \right\}.$$  
Define $\rt_t(n,H,f(n))$ to be the maximum number of edges in an $H$-free graph $G$ on $n$ vertices with $\alpha_t(G) \leq f(n)$ and define
\begin{align} \label{rtdef}  
 \theta_t(\mathcal{H})=  \lim_{\epsilon \rightarrow 0} \lim_{n \rightarrow \infty} \frac{1}{n^2} \rt_t(n,H,\epsilon n).
\end{align}

We write $\rt_t(n,H,o(n))= \theta_t(\mathcal{H}) n^2 + o(n^2)$.
For $t = 2$, it is easy to show that the limit in \eqref{rtdef} exists; for $t \geq 3$, the fact that
these limits exist is not obvious, it was one of the main results in \cite{rt-erdos94}.

For complete graphs of odd order, Erd\H{o}s and S\'{o}s~\cite{rt-erdos70} proved that
\begin{align*}
\theta( K_{2s+1}) = \frac{1}{2} \left(1-\frac{1}{s}\right),
\end{align*}
leaving open the question of determining $\theta( K_{2s})$ for $s\ge 2$.
The first celebrated result in Ramsey-Tur\'{a}n theory was that $\theta( K_4) = \frac{1}{8}$.
In one of the first applications of the Regularity Lemma\footnote{This was an earlier version of the  Szemer\'edi Regularity Lemma.} to graph theory, Szemer\'{e}di~\cite{rt-szemeredi72}
proved that $\theta(K_4) \leq \frac{1}{8}$ in 1972.  Four years later,
Bollob\'{a}s and Erd\H{o}s~\cite{beg-bollobas76} provided a surprising geometric construction
using high dimensional spheres which
proved that Szemer\'{e}di's upper bound was tight.
It was not until 1983 that Erd\H{o}s, Hajnal, S\'{o}s, and Szemer\'{e}di~\cite{rt-erdos83}
 extended this result to all complete graphs of even order, determining
$\theta(K_{2s}).$ 

It was also proved in  \cite{rt-erdos83} that $\theta(H) \leq \theta(K_s)$ for $s\ge 5$,
 where $s$ is the minimum integer for which $V(H)$ can be partitioned into $\left\lceil s/2 \right\rceil$ sets  $V_1, \ldots, V_{\left\lceil s/2 \right\rceil}$
such that $V_1, \ldots, V_{\left\lfloor s/2 \right\rfloor}$ span forests in $H$ and if $s$ is odd then  $V_{\left\lceil s/2 \right\rceil}$ spans an independent set.
For odd $s $ this bound is sharp. The `simplest' major   open question is to decide if $\theta(K_{2,2,2})=0$.

The exact threshold function for $K_s$ for $s \geq 4$ is also still unknown, but Sudakov~\cite{rt-sudakov03}
showed, using the so called ``dependent random choice method,'' that $\rt(n,K_4,n2^{-\omega \sqrt{\log n}}) = o(n^2)$, where $\omega = \omega(n)$ is any function
going to infinity arbitrarily slowly.  Note that $n2^{-\omega \sqrt{\log n}}/n^{1-\delta} \rightarrow \infty$
as $n\rightarrow \infty$ for any fixed $\delta$.

No results about $\rt_t(n,H,o(n))$ for $t \geq 3$ were known until  Erd\H{o}s, Hajnal, Simonovits, S\'{o}s,
and Szemer\'{e}di~\cite{rt-erdos94} proved that  the limit in \eqref{rtdef} exists when $H$ is a complete graph,
$\theta_t(K_s) \leq \frac{1}{2} \left( 1 - \frac{t}{s-1} \right) $, and this 
is sharp for all $s \equiv 1 \pmod t$.  Note that for $t = 2$ this was already known by Erd\H{o}s and S\'{o}s~\cite{rt-erdos70}.
Additionally, for some special cases, for $\ell = 1,2,3,4, 5$ and $\ell \leq t+1$ they proved that $$\theta_t(K_{t+\ell}) \leq \frac{\ell - 1}{4t} .$$
In~\cite{rt-erdos94} a
 construction was given  proving that 
\begin{equation}\label{2t_t8}
\theta_t(n,K_{2t},o(n)) \geq \frac{1}{8}.
\end{equation}
Unfortunately, the proof that the constructed graph
has small independence number relied on a theorem of Bollob\'{a}s~\cite{beg-bollobas89}
which has been withdrawn as incorrect~\cite{beg-bollobas10}.
Therefore, until now, it was unknown if  $\theta_t(K_s)$ is positive for   $s \leq 2t$.
Erd\H{o}s, Hajnal, Simonovits, S\'{o}s,
and Szemer\'{e}di~\cite{rt-erdos94} posed several open problems.

\newprob{pminl}
\begin{pminl}\label{pminl}
(\cite[Problem 2.12]{rt-erdos94}) Find the minimum $\ell$ such that $\theta_t(K_{t+\ell})  >0$.
\end{pminl}

In~\cite{rt-erdos83}, Erd\H{o}s, Hajnal, S\'{o}s, and Szemer\'{e}di wrote that to solve Problem~\ref{pRT35} below ``an analogue of the Bollob\'{a}s-Erd\H{o}s
graph would be needed which we think will be extremely hard to find.''

\newprob{pRT35}
\begin{pRT35} \label{pRT35}
(\cite{rt-erdos94},~\cite{rt-erdos83}, and~\cite[Problem 17]{rt-simonovits01}) 
Determine if $\theta_3(K_5) >0$.
\end{pRT35}

As we already mentioned,  Problem~\ref{pRT35} is motivated by the history of the analogous question for
 $t=2$: 
Erd\H{o}s and S\'{o}s~\cite{rt-erdos70} observed that $\theta(K_5) > 0$ and
$\theta(K_3) = 0$, leaving open the hard problem deciding if $\theta(K_4)>0$, which was solved by   Bollob\'as
and Erd\H os~\cite{beg-bollobas76}.   For $ t = 3$, it is easy to observe  
 that $\theta_3(K_4) = 0$ and $\theta_3(K_7) > 0$, motivating Problem~\ref{pRT35}.

\section{Results}

The main result of our paper is solving Problems~\ref{pminl} and \ref{pRT35} by constructing 
graphs showing that  $\theta_t(K_{t+\ell})>0$ for $2 \leq \ell \leq t$. This is a breakthrough step in the area;
 in this part of extremal graph theory constructions usually do not come easily.

\newtheorem{mainind3k5}[thmctr]{Theorem}
\begin{mainind3k5} \label{mainind3k5}
For $t \geq 2$ and $2 \leq \ell \leq t$, let $u =  \left\lceil  t/2 \right\rceil$.  Then
\begin{align*}
  \theta_t(K_{t+\ell}) \geq \frac{1}{2} \left( 1 - \frac{1}{\ell} \right) 2^{-u^2}.
\end{align*}
\end{mainind3k5}

For comparison, trivially, $\theta_t(K_{t+1}) = 0$. Note that for $t=\ell$ the bound in \eqref{2t_t8} is better than in Theorem~\ref{mainind3k5}.
 Theorem~\ref{mainind3k5} can also  be used to give a lower bound for  $\theta_t(K_{qt+\ell})$ for all  $q \geq 1$ and $2 \leq \ell \leq t$.
Let $G$ be a member of the graph sequence 
 constructed to prove Theorem~\ref{mainind3k5} and let $T$ be a complete $(q-1)$-partite
graph with almost equal class sizes.
In each class of $T$, insert a $K_{t+1}$-free graph with small $K_t$-independence number.  (Such a graph
exists by the Erd\H{o}s-Rogers Theorem~\cite{rt-erdos62}.)  Lastly, completely join $G$ and $T$.  Any copy of $K_{qt+\ell}$
which appears
in this graph can have at most $t$ vertices in each part of $T$.  This forces $G$ to contain $t+\ell$ vertices of the copy of
$K_{qt+\ell}$, which is a contradiction. This graph also has a small $K_t$-independence number.  
Letting $|G|=an$ we instantly conclude the following.

\newtheorem{corindks}[thmctr]{Corollary}
\begin{corindks} \label{corindks}
For $t,q \geq 2$ and $2 \leq \ell \leq t$, let $u = \left\lceil  t/2 \right\rceil$.  For any $0 < a < 1$,
\begin{equation}
\theta_t(K_{qt+\ell}) \geq \frac{1}{2} \left( 1- \frac{1}{\ell} \right) 2^{-u^2} a^2 + \binom{q-1}{2}\left( \frac{1-a}{q-1} \right)^2 + (1-a)a.
\end{equation}
\end{corindks}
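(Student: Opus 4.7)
\medskip

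\noindent\textbf{Proof plan for Corollary~\ref{corindks}.}
The plan is to combine three ingredients in a single join construction on $n$ vertices: (i) a graph $G$ on $an$ vertices that realizes the lower bound of Theorem~\ref{mainind3k5} for $\theta_t(K_{t+\ell})$, (ii) a complete $(q-1)$-partite graph $T$ on the remaining $(1-a)n$ vertices with balanced parts $V_1,\dots,V_{q-1}$ each of size $(1-a)n/(q-1)$, and (iii) inside each part $V_i$ an Erd\H{o}s--Rogers graph $F_i$, that is, a $K_{t+1}$-free graph on $|V_i|$ vertices with $\alpha_t(F_i) = o(n)$, whose existence is guaranteed by the Erd\H{o}s--Rogers theorem. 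Let $H$ be the graph on $V(G)\cup V_1\cup\cdots\cup V_{q-1}$ obtained by taking $G$, the $F_i$'s, the complete bipartite graph between every pair $V_i,V_j$, and the complete bipartite graph between $V(G)$ and $V(T)$. Then I would verify in turn that $H$ is $K_{qt+\ell}$-free, that $\alpha_t(H) = o(n)$, and finally count its edges.

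For the $K_{qt+\ell}$-freeness, suppose $H$ contains a clique $K$ of size $qt+\ell$. Since each $F_i$ is $K_{t+1}$-free, $|K\cap V_i|\le t$ for every $i$, so $|K\cap V(T)|\le (q-1)t$. Hence $|K\cap V(G)|\ge qt+\ell - (q-1)t = t+\ell$, contradicting that $G$ is $K_{t+\ell}$-free. For the independence condition, let $S$ be any $K_t$-free induced subgraph of $H$. Then $S\cap V(G)$ is $K_t$-free in $G$, so $|S\cap V(G)|\le \alpha_t(G) = o(n)$, and similarly $|S\cap V_i|\le \alpha_t(F_i) = o(n)$ for each $i$. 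Summing over the at most $q$ parts (with $q$ fixed) gives $|S|=o(n)$, as required.

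The final step is just a clean edge count, split by the three types of edges. The graph $G$ contributes at least $\tfrac{1}{2}(1-\tfrac{1}{\ell})2^{-u^2}(an)^2 - o(n^2)$ edges by Theorem~\ref{mainind3k5}; the complete multipartite structure of $T$ contributes exactly $\binom{q-1}{2}\bigl(\tfrac{(1-a)n}{q-1}\bigr)^2$ edges; the complete join between $V(G)$ and $V(T)$ contributes $an\cdot(1-a)n = a(1-a)n^2$ edges; and the edges inside each $F_i$ are at most $\binom{|V_i|}{2}$ but all we need is that their contribution, together with the error term above, is absorbed into $o(n^2)$. Dividing the total by $n^2$ and passing to the limit in the definition of $\theta_t$ yields the stated lower bound.

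I do not anticipate any real obstacle here: the clique-freeness argument is a short pigeonhole, the $K_t$-independence bound is a crude union bound over the $q$ parts, and the edge count is routine. The only mild technical point is making sure that the Erd\H{o}s--Rogers fillers $F_i$ exist on parts of the required size with $\alpha_t$-number $o(n)$, which is exactly the content of the Erd\H{o}s--Rogers theorem cited in the paragraph preceding the corollary.
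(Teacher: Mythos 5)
Your proposal matches the paper's own argument: the paper's paragraph preceding Corollary~\ref{corindks} describes exactly this construction (join $G$ from Theorem~\ref{mainind3k5} with a balanced complete $(q-1)$-partite graph $T$, insert Erd\H{o}s--Rogers $K_{t+1}$-free fillers into each class of $T$, then argue clique-freeness by the pigeonhole $|K\cap V_i|\le t$ and bound $\alpha_t$ by a union over the $q$ pieces), and the edge count you give yields precisely the stated formula. One tiny quibble: the edges inside each $F_i$ need not be ``absorbed'' into anything --- they are extra edges that only help the lower bound and can simply be discarded from the count.
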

The precise
formula for $a$ is cumbersome so it is not included here.  Instead, we list
the optimized value for some small values of $s$ and $t$.  In ~\cite{rt-erdos94} the following problem was posed.
\newprob{pRT3small}
\begin{pRT3small} \label{pRT3small}
(\cite{rt-erdos94},~\cite{rt-erdos83}, and~\cite[Problem 19]{rt-simonovits01}) 

 $$\theta_3(K_5) \leq \frac{1}{12},\ \
 \theta_3(K_6) \leq \frac{1}{6},\ \
 \theta_3(K_8) \leq \frac{3}{11},\ \
 \theta_3(K_9) \leq \frac{3}{10} .$$
Are any of these bounds tight?
\end{pRT3small}

 Optimizing $a$  in Corollary~\ref{corindks},  the relative
size of $G$ and $T$,  we obtain the following lower bounds for the graphs
considered in Problem~\ref{pRT3small}.
$$\frac{1}{64} \le \theta_3(K_5),\ \
\frac{1}{48} \le \theta_3(K_6),\ \
\frac{16}{63}\le \theta_3(K_8),\ \ 
 \frac{12}{47}\le \theta_3(K_9).$$  

Note that in \cite{rt-erdos94} it was proved that $\rt_3(n,K_7,o(n)) = \frac{1}{4} n^2 + o(n^2)$.

 Theorem~\ref{mainind3k5} follows from a result about hypergraphs.
For $s>r$ let $\tk{s}{r}$ be the $r$-uniform hypergraph obtained from the complete graph $K_s$ by replacing each graph edge $uv$ with a hypergraph edge which besides $u,v$ contains $r-2$ new vertices.  The \textbf{core vertices}
of $\tk{s}{r}$ are the $s$ vertices of degree larger than one.
Let $\tkf{s}{r}$ be the family of $r$-uniform hypergraphs $\HH$ such that there exists a set $S$ of $s$ vertices of $\HH$ where
each pair of vertices from $S$ are contained  in some hyperedge of $\HH$.  The set $S$ is called the set of \textbf{core vertices} of  $\HH$.

Let $T_s^r(n)$ be the complete $n$-vertex, $r$-uniform, $s$-partite hypergraph with part sizes as equal as possible.
Mubayi~\cite{rt-mubayi06-2} showed for $s>r$ that  $\ex(n, \tkf{s+1}{r}) = \left| T_s^r(n) \right|$ and
$\ex(n,\tk{s+1}{r}) = (1 + o(1)) \left| T_s^r(n) \right|$.  Recently, Pikhurko~\cite{rt-pikhurko},  improving on  ~\cite{rt-mubayi06-2},  has shown that for large $n$,
$\ex(n,\tk{s+1}{r}) = \left| T_s^r(n) \right|$ and that $T_s^r(n)$ is the unique extremal example.  Since in this case the extremal
hypergraphs have large independent sets, it is interesting to study the behavior of the function $\rt(n,\tk{s}{r},f(n))$ for $f(n)=o(n)$.
A simple observation is the following.

\newtheorem{tkrprop}[thmctr]{Proposition}
\begin{tkrprop}
For $r \geq 2$,
\begin{enumerate}
\item $\rt(n,\tk{r+1}{r}, o(n)) = o(n^r)$.
\item $\rt(n,\tkf{2r-1}{r},o(n)) = o(n^r)$.
\end{enumerate}
\end{tkrprop}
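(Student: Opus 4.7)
The plan for both parts leverages the small-independence hypothesis: a large common neighborhood must contain a hyperedge, which together with an averaging argument forces the forbidden structure to appear.

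For part (2), suppose for contradiction that $\HH$ is an $n$-vertex $r$-uniform hypergraph that is $\tkf{2r-1}{r}$-free, with $\alpha(\HH) \leq f(n) = o(n)$ and $e(\HH) \geq \epsilon n^r$ for some fixed $\epsilon > 0$. For an $(r-1)$-subset $S$, write $N(S) = \{v : S \cup \{v\} \in E(\HH)\}$. Since $\sum_{|S|=r-1} |N(S)| = r \cdot e(\HH) \geq r \epsilon n^r$, the average value of $|N(S)|$ is at least $r!\, \epsilon\, n$, so some $S = \{v_1, \ldots, v_{r-1}\}$ satisfies $|N(S)| \geq r!\, \epsilon\, n$. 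For $n$ large this exceeds $f(n) \geq \alpha(\HH)$, so $N(S)$ is not independent and contains a hyperedge $\{u_1, \ldots, u_r\}$. The $2r-1$ vertices $S \cup \{u_1, \ldots, u_r\}$ are then pairwise contained in hyperedges of $\HH$: pairs within $S$ via $S \cup \{u_1\}$, pairs $\{v_i, u_j\}$ via $S \cup \{u_j\}$, and pairs $\{u_i, u_j\}$ via $\{u_1, \ldots, u_r\}$. This exhibits $\HH$ as containing a member of $\tkf{2r-1}{r}$, the desired contradiction.

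For part (1), the case $r=2$ reduces to the classical $\rt(n, K_3, o(n)) = o(n^2)$, which follows because every neighborhood in a triangle-free graph is independent. For $r \geq 3$ the plan is analogous but targets the specific hypergraph $\tk{r+1}{r}$, whose $r+1$ cores require, for each of the $\binom{r+1}{2}$ pairs, a disjoint $(r-2)$-element set of ``extras''. The approach has two stages: (i) locate $r+1$ vertices all of whose pair-codegrees in $\HH$ --- the number of $(r-2)$-sets $T$ with $\{x,y\} \cup T \in E(\HH)$ --- exceed $D = C n^{r-3}$ for a sufficiently large constant $C = C(r)$; (ii) pair by pair, greedily select an $(r-2)$-set from the codegree disjoint from the at most $O(r^3)$ previously used core and extra vertices. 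Step (ii) succeeds because the number of bad $(r-2)$-sets in any one codegree is at most $O(r^3) \cdot n^{r-3}$, which is less than $D$ for $C$ large.

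The main obstacle is step (i) under the full strength of $\alpha(\HH) = o(n)$. The ``heavy codegree'' graph $G$ in which $\{u,v\}$ is an edge iff the pair codegree of $\{u,v\}$ is at least $C n^{r-3}$ is $K_{r+1}$-free (otherwise step (ii) already produces $\tk{r+1}{r}$) and has $\Omega(n^2)$ edges by averaging; a Ramsey-Tur\'an argument would then give $|G| \leq \theta(K_{r+1}) n^2 + o(n^2)$, provided $\alpha(G) = o(n)$. A Caro--Tuza--Spencer calculation only yields $\alpha(G) = O(f(n)^{(r-1)/(r-2)} n^{(r-3)/(r-2)})$, which is $o(n)$ just when $f(n) = o(n^{1/(r-1)})$. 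To cover all of $f(n) = o(n)$, I would iterate the neighborhood-restriction scheme of part (2) inside $\HH[N(S)]$: passing to a subhypergraph on $\Omega(n)$ vertices with the same $\alpha$ bound, the averaging-plus-hyperedge-in-the-neighborhood argument builds successive layers of high-codegree structure until an $r+1$-core configuration with pairwise codegrees $\geq D$ emerges, after which step (ii) completes the contradiction.
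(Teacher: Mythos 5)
Your part (2) is correct and essentially the paper's argument (the paper first passes to an $r$-partite subhypergraph, but that step is not needed for part (2); your direct averaging over $(r-1)$-sets is cleaner). All the checks — each pair of $S \cup \{u_1,\ldots,u_r\}$ lying in a hyperedge — go through.

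Part (1) has a real gap, which you acknowledge: you try to \emph{first} locate $r+1$ vertices all of whose $\binom{r+1}{2}$ pairwise codegrees exceed $D = Cn^{r-3}$ (via a heavy-codegree graph $G$ plus a Ramsey--Tur\'an argument), and the $\alpha(G)=o(n)$ hypothesis you need is unavailable for general $f(n)=o(n)$. The iteration you sketch at the end is too vague to close this. The missing idea is that you should \emph{not} look for a clique of mutually heavy pairs at all. Instead, do a cheap codegree \emph{filtration} up front: repeatedly delete all hyperedges through any pair whose codegree is at most $Cn^{r-3}$. Each pair is emptied at most once and costs at most $Cn^{r-3}$ edges, so the filtration removes $O(n^{r-1}) = o(n^r)$ hyperedges, and afterwards every pair that lies in \emph{any} surviving edge has codegree at least $Cn^{r-3}$. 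Now rerun your part (2) argument on the filtered hypergraph $\mathcal{H}'$: get an $(r-1)$-set $S$ with $|N_{\mathcal{H}'}(S)| = \Omega(n)$, find a hyperedge $E = \{u_1,\ldots,u_r\}$ of the \emph{original} $\mathcal{H}$ inside $N_{\mathcal{H}'}(S)$, and take as the $r+1$ core vertices the set $S \cup \{u_1,u_2\}$. Every pair $\{s_i,s_j\}$ or $\{s_i,u_j\}$ lies in the surviving edge $S\cup\{u_j\}\in\mathcal{H}'$, so its codegree is at least the threshold and your greedy stage (ii) picks fresh $(r-2)$-sets for those $\binom{r+1}{2}-1$ pairs; the one remaining pair $\{u_1,u_2\}$ needs no codegree at all, since $E$ itself supplies the $r-2$ extra vertices (and you choose $E$ first, then pick the other extras avoiding $E$). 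This is exactly the paper's route: filtration, then build the core from $S$ together with two vertices of an edge inside $N(S)$. In short, your part (2) already contains the right structural ingredients — $S$ and $E\subseteq N(S)$ — you just need the filtration to make the codegrees usable, rather than searching for an $(r+1)$-clique of heavy pairs from scratch.
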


\begin{proof}
We  prove only the statement  for $3$-uniform hypergraphs; the proof can be easily extended to every $r\ge 3$.

Let $\mathcal{H}$ be a $3$-uniform, $n$-vertex hypergraph with independence number at most $\epsilon n$ and at least $9\epsilon n^3 + 72n^{2}$ edges.
For simplicity, assume $3$ divides $n$ and let $\mathcal{H}'$ be a $3$-partite subhypergraph of $\mathcal{H}$ with equal part sizes and with at least $\frac{1}{9}$ of the edges of $\mathcal{H}$.  
Recall that for a pair of vertices $x$ and $y$, their \textbf{codegree} $d(x,y)$ 
is the number of vertices $z$ such that $\left\{ x,y,z \right\}$ is an edge.
For each pair  $x,y$ of vertices  in different classes, delete
all edges containing $x$ and $y$ if their codegree is at most $16$.  We delete at most $8n^{2}$ hyperedges.
Thus we have a $3$-partite hypergraph $\mathcal{H}'$ with at least $\epsilon n^3$ edges and the
codegree of any pair of vertices from different classes is zero or at least $16$.

Since $\mathcal{H}'$ has at least $\epsilon n^3$ cross-edges, the maximum codegree of $\mathcal{H}'$ is at least $\epsilon n$. Let
$x,y$ be a pair of vertices from different classes with codegree at least $\epsilon n$, and let $Z$ be the set of vertices $z$ in the third class
such that $\left\{ x, y, z \right\}$ is an edge.  Since the independence number of $\mathcal{H}$ is at most $\epsilon n$,
there exists a hyperedge $E$ of $\mathcal{H}$ contained in $Z$.  The vertices in $E$ together with $x,y$ form a hypergraph
in $\tkf{5}{3}$. (In the $r$-uniform case, the edge $E$ together with $r-1$ vertices will form a copy of $\tkf{2r-1}{r}$.)
Thus any $3$-uniform, $n$-vertex, $\tkf{5}{3}$-free hypergraph with independence number at most $\epsilon n$
can have at most $9\epsilon n^3 + 72 n^2$ edges.

To find a copy of $\tk{4}{3}$, let $z_1$ and $z_2$ be two vertices from $E$.  The core vertices in a copy of $\tk{4}{3}$ are 
$x$, $y$, $z_1$, and $z_2$.  The vertices $z_1$ and $z_2$ are contained together in the edge $E$, and since $x$ and $z_1$ are contained
together in a hyperedge of $\mathcal{H}'$, the codegree of $x$ and $z_1$ is at least $16$.  Thus we can find an edge of $\mathcal{H}$
containing $x$ and $z_1$ where the third vertex avoids all previously used vertices.  Similarly we can find edges containing $x,z_2$ and $x,y$ and $y,z_i$ where the third vertex
has not yet been used.  Thus we find a copy of $\tk{4}{3}$ in $\mathcal{H}$.  (In the $r$-uniform case,
take as core vertices two vertices from $E$ together with $r-1$ other vertices to find a copy of $\tk{r+1}{r}$.)
\end{proof}

Using our construction, we prove the following lower bounds.

\newtheorem{maintkthm}[thmctr]{Theorem}
\begin{maintkthm} \label{maintkthm}
Let $r \geq 3$ and let $u= \left\lceil  r/2 \right\rceil$.
\begin{itemize}
   \item[(i)] $\rt(n,\tk{r+2}{r}, o(n)) \geq 2^{-\binom{ur}{2} + r\binom{u}{2}} \left( \frac{n}{r} \right)^r$.
   \item[(ii)] $\rt(n,\tkf{2r}{r}, o(n)) \geq 2^{-\binom{ur}{2} + r\binom{u}{2}} \left( \frac{n}{r} \right)^r$.
\end{itemize}
\end{maintkthm}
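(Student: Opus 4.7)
The plan is to construct, for every $\epsilon>0$ and all sufficiently large $n$, a single $n$-vertex $r$-uniform hypergraph $\mathcal{H}$ that generalizes the Bollob\'as--Erd\H{o}s sphere graph and simultaneously realizes all three desiderata: at least $2^{-\binom{ur}{2}+r\binom{u}{2}}(n/r)^r$ edges, independence number at most $\epsilon n$, and freedom from both $\tk{r+2}{r}$ and from every member of $\tkf{2r}{r}$. Since the edge count claimed in (i) and (ii) is identical, one construction will serve for both parts of the theorem.

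\textbf{Construction.} Each vertex $v$ would be associated with a geometric label drawn uniformly at random from a high-dimensional sphere (or a suitable product of $u$ lower-dimensional spheres), with ambient dimension growing like $ur$. A set of $r$ vertices is declared to be a hyperedge iff every pair of their labels satisfies a prescribed symmetric geometric condition, e.g., near-orthogonality with tolerance tuned to $u=\lceil r/2\rceil$. The condition would be arranged so that each pair contributes $u^2$ approximately independent binary constraints, giving each $r$-tuple probability $2^{-\binom{r}{2}u^2}=2^{-\binom{ur}{2}+r\binom{u}{2}}$ of becoming a hyperedge.

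\textbf{Edge count and forbidden substructures.} The edge count follows by linearity of expectation and a standard deletion argument, using that there are $(n/r)^r$ candidate tuples. For the forbidden configurations, observe that both $\tk{r+2}{r}$ and any member of $\tkf{2r}{r}$ demand strictly more than $r$ \emph{core vertices} such that each pair is jointly contained in some hyperedge. By the edge condition this forces the core labels to be pairwise geometrically compatible; but the ambient dimension and tolerance are tuned so that at most $r$ labels can be pairwise compatible (a Ramsey-type packing argument on the sphere), yielding a contradiction. This step handles $\tk{r+2}{r}$-freeness and $\tkf{2r}{r}$-freeness in one stroke.

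\textbf{Independence number.} This is the main obstacle and the technical heart of the proof. I would use concentration of measure on the sphere, following Bollob\'as--Erd\H{o}s: any vertex subset $I\subseteq V(\mathcal{H})$ of size $\epsilon n$ has a label distribution that is sufficiently spread out that one can locate inside it an $r$-tuple whose labels satisfy the geometric condition, which is then a hyperedge of $\mathcal{H}$ contained in $I$. The delicate part is calibrating the ambient dimension and the tolerance of the geometric condition: enlarging the tolerance raises the edge count and helps break up large independent sets, but too much tolerance allows $>r$ mutually compatible labels and creates a forbidden substructure. Threading this balance is precisely what the choice $u=\lceil r/2\rceil$ enables, and verifying it rigorously, probably via a covering/concentration argument together with a careful moment estimate, will occupy the bulk of the proof.
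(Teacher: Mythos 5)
Your density arithmetic is correct ($2^{-\binom{r}{2}u^2} = 2^{-\binom{ur}{2}+r\binom{u}{2}}$), and you are right that a single construction serves both (i) and (ii), but the mechanism you describe for excluding the forbidden configurations does not work, and this is not a small calibration issue.

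The crux of your proposal is the claim that ``the ambient dimension and tolerance are tuned so that at most $r$ labels can be pairwise compatible.'' This is false for any tolerance that also yields positive edge density: in a $k$-dimensional sphere, the set of points within distance $\sqrt{2}-\theta$ of a fixed point has measure close to $1/2$, so one can easily find arbitrarily many pairwise-compatible labels. No choice of $u$ or $\theta$ can make a single symmetric pairwise condition both dense (probability $2^{-u^2}$ per pair, bounded below) and rigid enough to cap the size of a compatible clique at $r$; those two requirements pull in opposite directions and cannot both hold. The actual construction in the paper is $r$-partite with two \emph{different} geometric conditions: cross-hyperedges between parts demand all coordinate-distances small ($\le \sqrt{2}-\theta$), while inside-hyperedges within a part demand, for each pair of vertices, some coordinate-distance large ($\ge 2-\theta$). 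The obstruction to forbidden configurations is then not a packing bound on compatible labels but the Bollob\'as--Erd\H{o}s four-point lemma (Property~\propbadnospace): two far-apart points in one part, two far-apart points in another part, with all cross-distances small, is geometrically impossible. That lemma only forbids $\tkf{4}{r}$ straddling two parts with two core vertices each; it says nothing about many core vertices inside a single part, which brings up a second gap in your sketch.

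To kill the configurations living inside a single part — a $\tk{3}{r}$ in the $\tk{r+2}{r}$ case, a member of $\tkf{r+1}{r}$ in the $\tkf{2r}{r}$ case — the paper applies a R\"odl-style random blowup (Theorem~\ref{randblowup}) to the inside-hyperedges only, sparsifying them so that any connected hypergraph $\mathcal{F}$ with $|V(\mathcal{F})| < r + (r-1)(|\mathcal{F}|-1)$ cannot appear inside a part, while preserving the cross-edge density and the small-independence-number property. Your proposal has no partition, so it has no notion of inside versus cross hyperedges and cannot perform this step. Finally, even granting the blowup, one must verify the numeric inequality for the specific forbidden hypergraphs: $\tk{3}{r}$ has $3r-3 < 3r-2$ vertices, and for $\tkf{r+1}{r}$ one needs the ordering argument on the hyperedges $E_{a,b}$ to show every minimal member satisfies the vertex bound. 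None of this is visible in your sketch; those counts are what actually close the argument.
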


\newcommand{\refmaintkthm}{\ref{maintkthm}  (i)}
\newcommand{\refmaintkfthm}{\ref{maintkthm}  (ii)}

Note that unlike in the Tur\'an-density  extremal case, where for large $n$ we have $\ex(n,\tk{s}{r}) = \ex(n,\tkf{s}{r})$, the
Ramsey-Tur\'{a}n numbers for $\tk{s}{r}$ and $\tkf{s}{r}$ are different.  Let $\mathcal{F}^{r}(s)$ be the subfamily
of $\tkf{s}{r}$ containing those hypergraphs where each edge contains exactly two core vertices.
We can prove similarly to Theorem~{\refmaintkthm} that $\theta(\mathcal{F}^{r}({r+2})) \geq 2^{-\binom{ur}{2} + r\binom{u}{2}} \left( \frac{1}{r} \right)^r$. Based on this 
  we conjecture 
that $\mathcal{F}^{r}(s)$  behaves like $\tk{s}{r}$.  

\newtheorem{tkfconj}[thmctr]{Conjecture}
\begin{tkfconj}
$\theta(\mathcal{F}^{r}(s)) = \theta(\tk{s}{r})$ for $s>r$.
\end{tkfconj}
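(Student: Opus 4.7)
The upper bound $\theta(\mathcal{F}^{r}(s)) \leq \theta(\tk{s}{r})$ is immediate: since $\tk{s}{r}$ itself belongs to $\mathcal{F}^{r}(s)$, every $\mathcal{F}^{r}(s)$-free hypergraph is in particular $\tk{s}{r}$-free. The conjecture therefore reduces to showing that the lower bound constructions for $\theta(\tk{s}{r})$ can be arranged to avoid all of $\mathcal{F}^{r}(s)$ at essentially no loss in density.

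My plan is to mirror the scheme used for $\tk{s}{r}$ in Theorem~\ref{maintkthm} and Corollary~\ref{corindks}. For the base case $s = r+2$, the paper already asserts a proof parallel to Theorem~\ref{maintkthm}(i) giving $\theta(\mathcal{F}^{r}(r+2)) \geq 2^{-\binom{ur}{2}+r\binom{u}{2}}(1/r)^r$, which matches the known lower bound for $\tk{r+2}{r}$. For $s > r+2$, I would imitate Corollary~\ref{corindks}: take an $\mathcal{F}^{r}(r+2)$-free hypergraph $G$ of near-extremal density with $\alpha(G) = o(n)$, build a complete multipartite Tur\'{a}n-like hypergraph $T$ on the remaining vertices (the number of parts chosen to match the parametrization used in the $\tk{s}{r}$ lower bound), insert small Erd\H{o}s--Rogers blocks into each class of $T$ to kill its independence number, and take the appropriate union of $G$ and this multipartite structure. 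Optimizing the ratio between $|G|$ and $|T|$ should recover the known lower bound for $\theta(\tk{s}{r})$.

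The substance of the argument is verifying that the combined hypergraph is $\mathcal{F}^{r}(s)$-free. Given any $\HH \in \mathcal{F}^{r}(s)$ with core set $S$ of size $s$ sitting inside the combined construction, the plan is to pigeonhole across the parts of $T$ and show that at least $r+2$ elements of $S$ must land in the $G$-block; then the restriction of $\HH$ to those $r+2$ core vertices yields a sub-hypergraph still in $\mathcal{F}^{r}(r+2)$, contradicting the $\mathcal{F}^{r}(r+2)$-freeness of $G$.

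The main obstacle is making this pigeonhole rigorous under the ``exactly two core vertices per edge'' restriction. An edge of $\HH$ joining two core vertices from different parts of $T$ has its remaining $r-2$ vertices free to lie anywhere, so such cross-block edges can in principle cover many pairs of $S$ cheaply; one must argue that they cannot cover all $\binom{s}{2}$ core pairs without forcing $r+2$ elements of $S$ into a single $\mathcal{F}^{r}(r+2)$-free block. Intuitively this is what makes the conjecture plausible---the rigidity of $\mathcal{F}^{r}(s)$ mirrors that of $\tk{s}{r}$---but a careful inductive or case-by-case count of how core pairs can be covered by edges respecting the multipartite structure appears to be the essential combinatorial step, and it is also where I expect the main technical difficulty to lie.
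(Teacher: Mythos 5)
This statement is labeled a \emph{Conjecture} in the paper; there is no proof to compare against. The paper only remarks (without details) that the method of Theorem~\refmaintkthm{} yields $\theta(\mathcal{F}^{r}(r+2)) \geq 2^{-\binom{ur}{2}+r\binom{u}{2}}(1/r)^r$, and then \emph{conjectures} that the equality $\theta(\mathcal{F}^{r}(s)) = \theta(\tk{s}{r})$ holds for all $s>r$. So you are attempting an open problem.

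Your upper bound $\theta(\mathcal{F}^{r}(s)) \leq \theta(\tk{s}{r})$ is correct and trivial ($\tk{s}{r} \in \mathcal{F}^{r}(s)$, so $\mathcal{F}^{r}(s)$-freeness implies $\tk{s}{r}$-freeness). The gaps are in the other direction, and there are two of them. First, you yourself identify that the $\mathcal{F}^{r}(s)$-freeness verification---pigeonholing at least $r+2$ core vertices into the $G$-block despite cross-block edges being able to cover core pairs cheaply---is ``the essential combinatorial step,'' and you do not carry it out; that is a genuine missing idea, not a routine detail. Second, and more structurally: even if you completed that step, the argument would only establish $\theta(\mathcal{F}^{r}(s)) \geq X$, where $X$ is the density achieved by some explicit $\tk{s}{r}$-free construction. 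Since the value of $\theta(\tk{s}{r})$ is not determined for $s > r+2$ (the paper gives only the lower bound of Theorem~\refmaintkthm{} for $s=r+2$, and Corollary~\ref{corindks} is a \emph{graph} statement, not a hypergraph one), matching a particular lower-bound construction does not establish $\theta(\mathcal{F}^{r}(s)) \geq \theta(\tk{s}{r})$. To prove the conjecture one would instead need something like a \emph{transfer} argument: that an arbitrary near-extremal $\tk{s}{r}$-free hypergraph with small independence number can be modified, at negligible density loss, into an $\mathcal{F}^{r}(s)$-free one---a very different and stronger statement than reproducing a specific construction.
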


Theorems~\ref{mainind3k5} and \ref{maintkthm} are corollaries of the following theorem, which is our main tool.

\newtheorem{constrblowup}[thmctr]{Theorem}
\begin{constrblowup} \label{constrblowup}(Construction)
For any integer $r \geq 2$ there exist positive constants $c_1,c_2$ such that the following holds. For arbitrary small constants $\alpha, \beta > 0$, and any integer $N$, there exists an $m \geq N$  such that there exists
an $rm$-vertex, $r$-uniform hypergraph $\mathcal{G}$ with vertex partition $W_1, \ldots, W_r$ with $\left| W_i \right| = m$ and the
following properties:
\begin{itemize}
\item[(i)]  No subhypergraph  from $\tkf{4}{r}$ is embedded into 
$\mathcal{G}$  so that both $W_i$ and $W_j$ contain two core vertices for some $i \neq j$.
\item[(ii)] $ e(\mathcal{G})  \geq 2^{r\binom{u}{2} - \binom{ru}{2}} m^r - c_2\alpha m^r$, where $u = \left\lceil  r/2 \right\rceil$.
\item[(iii)] For any $i$, $\mathcal{G}[W_i]$ contains  no connected  hypergraph
       $\mathcal{F}$ with $\left| V(\mathcal{F}) \right| \leq r^3$ and
$$\left| V(\mathcal{F}) \right| < r + (r-1)(\left| \mathcal{F} \right|-1).$$
\item[(iv)] The independence number of $\mathcal{G}$ is at most $c_1\beta m$.
\end{itemize}
\end{constrblowup}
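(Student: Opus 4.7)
The plan is to generalize the Bollob\'as--Erd\H{o}s high-dimensional sphere construction to the $r$-uniform, $r$-partite setting. Fix a large dimension $d = d(r, \alpha, \beta)$ and a small window $\epsilon = \epsilon(\alpha, \beta) > 0$. Associate to each vertex $w \in W_i$ a $u$-tuple $X(w) = (x_1^w, \ldots, x_u^w)$ of points drawn independently and uniformly from carefully placed thin spherical caps in $S^{d-1}$, so that vectors attached to different parts are nearly orthogonal while vectors attached to the same part are nearly parallel. Declare a cross transversal $\{w_1, \ldots, w_r\}$ with $w_i \in W_i$ to be a hyperedge of $\mathcal{G}$ exactly when all $\binom{r}{2} u^2$ cross-part inner products $\langle x_j^{w_i}, x_k^{w_l}\rangle$ with $i \neq l$ lie in a common window of width $\epsilon$ around $0$. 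To prevent $W_i$ itself from being an enormous independent set, I would also add an auxiliary family of intra-part hyperedges, inserted whenever all the vectors $x_j^{\cdot}$ of an $r$-tuple drawn from $W_i$ are pairwise very close on the sphere; this mirrors the within-cap edges of the classical Bollob\'as--Erd\H{o}s graph.

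For the edge count (ii), each cross-pair event holds with probability close to $\tfrac{1}{2}$ by concentration of measure on $S^{d-1}$, and the underlying sampled vectors are mutually independent, so the $\binom{r}{2} u^2$ cross-pair events are independent and the expected density of cross-transversal hyperedges is close to $2^{-\binom{r}{2} u^2} = 2^{r\binom{u}{2} - \binom{ru}{2}}$; a second-moment argument upgrades this to (ii) up to the $c_2 \alpha m^r$ slack for $m$ large. For (iii), the auxiliary intra-part hyperedges come from $r$-tuples whose sampled vectors are nearly coincident on the sphere, so two such hyperedges sharing more than one vertex would force an atypical clumping of samples within a region of radius $\ll \epsilon$; by first-moment bounds this is avoidable by deleting a negligible fraction of edges, after which a short counting argument rules out every connected subhypergraph on at most $r^3$ vertices violating $|V(\mathcal{F})| < r + (r-1)(|\mathcal{F}|-1)$. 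For (i), suppose core vertices $w, w' \in W_i$ and $v, v' \in W_j$ of an embedded $\tkf{4}{r}$ exist. The cross pairs $\{w,v\}, \{w,v'\}, \{w',v\}, \{w',v'\}$ force all associated $u \times u$ cross vectors to be near-orthogonal, while the hyperedge containing the intra-part pair $\{w, w'\}$ must use some third part $W_\ell$ with its own cross constraints. Combining the near-parallel condition $x_j^w \approx x_j^{w'}$ inside the thin cap of $W_i$ with these cross-orthogonality conditions produces incompatible demands on the vectors in $W_\ell$ and $W_j$ once $\epsilon$ and the cap radius are small enough, giving a geometric contradiction.

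The independence bound (iv) will be the main obstacle, exactly as in the hardest step of the original Bollob\'as--Erd\H{o}s argument. Suppose $|I| \geq c_1 \beta m$ is independent; by pigeonhole some $W_i$ contains at least $c_1 \beta m/r$ vertices of $I$, and these can neither complete a cross-transversal hyperedge with any $(r-1)$-tuple drawn from the other parts of $I$, nor form an intra-part hyperedge among themselves. The intra-part constraint alone, by the within-cap covering argument of the classical proof, forces the $W_i$-portion of $I$ to spread out on the sphere; but then the pigeonhole that produces a cross-transversal $r$-tuple all of whose cross pairs are near-orthogonal becomes unavoidable once $d$ is large enough relative to $1/\beta$. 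The delicate technical point is coupling the cross and intra covering bounds so that a set large inside a single $W_i$ is forced either to form an intra-part hyperedge or to cooperate with the other parts of $I$ to form a cross-transversal hyperedge; running this circular argument by induction on $r$ and by careful use of high-dimensional spherical concentration is where most of the effort of the proof will lie.
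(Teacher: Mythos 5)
Your outline correctly identifies the high-dimensional-sphere strategy, but it contains an error in the definition of the intra-part hyperedges that breaks the argument for property (i). You propose inserting a hyperedge on an $r$-tuple inside $W_i$ whenever all the attached vectors are \emph{pairwise very close} on the sphere, saying this ``mirrors the within-cap edges of the classical Bollob\'as--Erd\H{o}s graph.'' But in the Bollob\'as--Erd\H{o}s graph (and in this paper's $\mathcal{H}$) the within-class edges join \emph{near-antipodal} points: $xy$ is an edge inside $V_i$ precisely when $d(x,y)\ge 2-\theta$, and the paper's $r$-uniform version requires, for every pair $\vec v_\ell,\vec v_m$ in a same-part hyperedge, some coordinate with $d(\vec v_\ell^{(j)},\vec v_m^{(j)})\ge 2-\theta$. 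This is not a cosmetic difference: the contradiction you invoke for (i) rests on Property (P4), which says you cannot have $p_1,p_2$ near-antipodal, $q_1,q_2$ near-antipodal, and all four cross distances at most $\sqrt2-\theta$. With your ``near-parallel within a part'' rule, the configuration $w\approx w'$ (both in $W_i$), $v\approx v'$ (both in $W_j$), and all cross pairs near-orthogonal is perfectly realizable on the sphere, so there is no geometric obstruction and (i) fails.

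Two further gaps. For (ii), the $\binom{r}{2}u^2$ cross-pair events are not mutually independent even when tuples are sampled i.i.d., since events involving the same vertex share that vertex's vector; the paper instead uses the explicitly stated geometric fact (P2) about the measure of an intersection of $t$ spherical caps, which is approximately $2^{-t}$ with a controlled error $t\alpha$. For (iv), you yourself flag the argument as circular and incomplete. The paper's route is quite different and, importantly, is carried out entirely inside a single part: Lemma~\ref{constrsmallind} shows $\alpha(\mathcal H[V_s])$ is already $O(\beta z^u)$, using a coordinate-by-coordinate greedy tree-embedding argument (Lemmas~\ref{constrmatchinglem} and~\ref{constrembedlem}, powered by the spherical isoperimetric inequality, i.e., Properties (P5)--(P6)), together with a covering of $K_r$ by $u$ trees. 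It then transports this to the blown-up hypergraph via the blowup's spanning property (Theorem~\ref{randblowup}). Your sketch of ``either an intra-part or a cross-transversal hyperedge must appear'' is both unnecessary and, as stated, not a proof. Relatedly, the paper proves (iii) by applying R\"odl's randomized blowup (Theorem~\ref{randblowup}) \emph{only} to the intra-part edge set $\mathcal{E}_2$, keeping cross-hyperedges fully blown up; your first-moment deletion plan could plausibly be made to work but is stated too vaguely to check, and it would still inherit the broken orientation of the intra-part rule.
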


We show that if  the independence number
of a $\tk{6}{3}$-free  $3$-uniform hypergraph with $n$ vertices is at most $n2^{-\omega (\log n)^{2/3}}$,
then  it has $o(n^3)$ edges.  The proof of Theorem~\ref{k3tkthm} for every $r \geq 3$ extends to  $\tk{2r}{r}$-free $r$-uniform hypergraphs  with independence number at most
$n2^{-\omega (\log n)^{(r-1)/r}}$,  we omit the details.

\newtheorem{k3tkthm}[thmctr]{Theorem}
\begin{k3tkthm} \label{k3tkthm}
Let $w = w(n)$ be any function tending to infinity arbitrarily slowly, and let
$f(n) = n2^{-w\cdot (\log n)^{2/3}}$.  Then $\rt(n,\tk{6}{3}, f(n)) = o(n^3)$.
\end{k3tkthm}

In Section~\ref{secSphereProp} we state several properties of the $k$-dimensional
unit sphere which will be used  in the construction.  In Section~\ref{secFormer} we describe two
earlier constructions by Bollob\'{a}s and Erd\H{o}s \cite{beg-bollobas76}
and R\"{o}dl \cite{beg-rodl85}.
In Section~\ref{secConstruction} we
describe our construction and prove several properties of it, and in Section~\ref{secTheorems} we
show how the construction presented in Section~\ref{secConstruction} can be modified to prove
Theorems~\ref{mainind3k5}, \ref{maintkthm}, and \ref{constrblowup}.  In
Section~\ref{secSmallInd} we prove Theorem~\ref{k3tkthm} and lastly we state some open problems
in Section~\ref{secOpen}.
Throughout the paper, we often omit the floor and ceiling signs for the sake of simplicity.


\section{Properties of the unit sphere} \label{secSphereProp}

Let $\mu$ be the Lebesgue measure on the $k$-dimensional unit sphere $\mathbb{S}^k \subseteq \mathbb{R}^{k+1}$ normalized so that $\mu(\mathbb{S}^k) = 1$.
For $A \subseteq \mathbb{S}^k$, define $\diam(A) = \sup \left\{ d(x,y) : x, y \in A \right\}$ where $d(x,y)$ is the Euclidean distance in $\mathbb{R}^{k+1}$.
For $A,B \subseteq \mathbb{S}^k$, define $$\diamt{A,B}  = \sup \{ d(a,b) : a \in A, b \in B \}.$$
For $A \subseteq \mathbb{S}^k$ and $t \geq 2$, define
\begin{align*}
d_t(A) = \sup \left\{ \min_{i \neq j} d(x_i, x_j) : x_1, \ldots, x_t \in A \right\}.
\end{align*}
In particular,   let $\delta = \delta_t$ be the
edge length of the $t$-simplex, i.e., $\delta_t = \sqrt{\frac{2t}{t-1}}$.
A \textbf{spherical cap} is the intersection of the unit sphere $\mathbb{S}^k$ with a halfspace.
The \textbf{center} of a spherical cap is the point in the spherical cap at maximum distance from $H$, where $H$ is the hyperplane bounding the halfspace.
The \textbf{height} of a spherical cap is the minimum distance between the center and $H$ and
the \textbf{diameter} of a spherical cap is the diameter of the sphere formed by the intersection of the spherical cap with $H$.
Note that if $a$ is the maximum distance between the center and a point of the spherical cap and $h$ is the height, then $2h = a^2$.

\medskip

\newcommand{\propsize}{(P1)\xspace}
\newcommand{\propsizemulti}{(P2)\xspace}
\newcommand{\propcap}{(P3)\xspace}
\newcommand{\propbad}{(P4)\xspace}
\newcommand{\propdiam}{(P5)\xspace}
\newcommand{\propdiamt}{(P6)\xspace}

\newcommand{\propsizenospace}{(P1)}
\newcommand{\propsizemultinospace}{(P2)}
\newcommand{\propcapnospace}{(P3)}
\newcommand{\propbadnospace}{(P4)}
\newcommand{\propdiamnospace}{(P5)}
\newcommand{\propdiamtnospace}{(P6)}
\newcommand{\propdiamexpand}{(P7)}
\newcommand{\propdiamtehsssz}{(P8)}
\newcommand{\propdiamexpandb}{(P7)\xspace}
\newcommand{\propdiamtehssszb}{(P8)\xspace}
\newcommand{\propdiambip}{(P9)}
\newcommand{\propdiambipb}{(P9)\xspace}

Given any $\alpha, \beta > 0$, it is possible to select $\epsilon > 0$ small enough and then $k$ large enough so that Properties~\propsize, \propsizemulti, and \propcap below
are satisfied.

\begin{itemize}
\item[(P1)] Let $C$ be a spherical cap in $\mathbb{S}^k$ with height $h$, where $2h = \left( \sqrt{2} - \epsilon/\sqrt{k} \right)^2$
 (this means that all points of the spherical cap are within distance $\sqrt{2} - \epsilon/\sqrt{k}$ of the center).
  Then $\mu(C) \geq \frac{1}{2} - \alpha$.
\item[(P2)] Let $C_1, \ldots, C_t$ be spherical caps in $\mathbb{S}^k$ with height $h$, where $2h = \left( \sqrt{2} - \epsilon/\sqrt{k} \right)^2$.
  Let $z_i$ be the center of $C_i$.  Assume for all $1 \leq i < j \leq t$ that $d(z_i, z_j) \leq \sqrt{2}$.  Then $\mu(C_1 \cap \ldots \cap C_t) \geq \frac{1}{2^t} -t\alpha$.
\item[(P3)] Let $C$ be a spherical cap with diameter $2 - \epsilon/(2\sqrt{k})$.  Then $\mu(C) \leq \beta$.
\end{itemize}
We also use the following properties of high dimensional spheres.
\begin{itemize}
\item[(P4)] For any $0 < \gamma < \frac{1}{4}$, it is impossible to have $p_1, p_2, q_1, q_2 \in \mathbb{S}^k$ such that $d(p_1, p_2) \geq 2 - \gamma$,
$d(q_1, q_2) \geq 2 - \gamma$, and $d(p_i,q_j) \leq \sqrt{2} - \gamma$ for all $1 \leq i,j \leq 2$.
\item[(P5)] Let $A \subseteq \mathbb{S}^k$ and let $C$ be a spherical cap of the same measure.
Then $\diam(A) \geq \diam(C)$.
\item[(P6)] Let $A,B \subseteq \mathbb{S}^k$ with equal measure and let $C$ be a cap of the same measure.  Then $\diamt{A,B} \geq \diam(C)$.
\end{itemize}

Properties~\propsize and \propsizemulti follow directly from the formula for the measure of a spherical cap,
Properties~\propcap, \propdiam,   and \propdiamt are all folklore results that are easy corollaries of the isoperimetric inequality on
the sphere \cite{beg-leader10}, and Property~\propbad is from \cite{beg-bollobas76},
 see also ~\cite{rt-erdos94}.

Erd\H{o}s, Hajnal, Simonovits, S\'{o}s,
and Szemer\'{e}di~\cite{rt-erdos94} gave a construction which they claim
proved $\rt_t(n,K_{2t},\linebreak[1] o(n)) \geq \frac{1}{8}n^2-o(n^2)$.  Unfortunately, the proof that the construction
has small independence number relies on a theorem of Bollob\'{a}s~\cite{beg-bollobas89} which has been withdrawn
as incorrect~\cite{beg-bollobas10}.  
In \cite{beg-bollobas89}, the following question was considered.
Is it true that if $C$ is a spherical cap with $\mu(C) = \mu(A)$, then $d_t(A) \geq d_t(C)$?  If this were true
as claimed in \cite{rt-erdos94}, 
then $\theta_t(K_{2t}) \geq \frac{1}{8}$.  
In a private communication, Bollob\'{a}s~\cite{beg-bollobas10} provided the following counterexample.
Take $C$ to be a cap of the sphere in three dimensions with small but
positive measure and let $C'$ be another cap of the same measure which is far from $C$.  Let $A = C \cup C'$.
Then if $\mu(C)$ is small enough we can approximate $C$ and $C'$ by circles with radius $r$.
Then $d_3(A) \approx 2r$ since we can take two points of $C$ and one point of $C'$.  But if $D$ is a cap
with the same measure as $A$ then $D$ has radius about $\sqrt{2}r$ so $d_t(D) \approx \sqrt{6}r > d_3(A)$.
This counterexample can be extended to higher dimensions and more than three points, 
but only seems to work when $C$ has small measure.


\ifthenelse{\boolean{includesphere}}{
\input{sphere-proofs.tex}
}{}

\section{Former constructions} \label{secFormer}

In this section, we describe two previous constructions; our construction will use ideas from both.

\textbf{The Bollob\'{a}s-Erd\H{o}s Graph}, \cite{beg-bollobas76}.
In order to prove that $\rt(n,K_4,o(n)) \geq \frac{n^2}{8}-o(n^2)$, we need to construct, for every $\alpha,\beta > 0$,
a $K_4$-free graph $G$ with $n$ vertices, independence number at most $\beta n$, and at least $\frac{n^2}{8} (1 - \alpha)$ edges.
Given $\alpha, \beta \geq 0$, take $\epsilon$ small enough and $k$ large enough so that Properties~\propsize and \propcap hold.
Divide the $k$-dimensional unit sphere $\mathbb{S}^k$ into $n/2$ domains having equal measure and diameter at most $\frac{\epsilon}{10 \sqrt{k}}$.  Choose a point from each domain
and let $P$ be the set of these points.  Let $\phi : P \rightarrow \mathcal{P}(\mathbb{S}^k)$ map points of $P$ to the corresponding domain of the sphere.
Take as vertex set of $G$ the disjoint union of two sets $V_1$ and $V_2$ both isomorphic to $P$.
For $x,y \in V_i$ we make
$xy$ an edge of $G$ if $d(x,y) \geq 2 - \epsilon/\sqrt{k}$.  For $x \in V_1, y \in V_2$ we make $xy$ an edge of $G$ if $d(x,y) \leq \sqrt{2} - \epsilon/\sqrt{k}$.
Then Property~\propsize shows that every vertex in $V_1$ has at least $\frac{1}{2} \left| V_2 \right| (1 - \alpha)$
neighbors in $V_2$ so the total number of edges is at least $\frac{1}{8} n^2 (1 - \alpha)$.
If $I$ is a set in $V_1$ with $\left| I \right| \geq \beta \left| V_1 \right| = \beta \frac{n}{2}$,
then $\mu(\phi(I)) = \left| I \right|/\left| P \right| \geq \beta$.
Let $C$ be a spherical cap of measure $\mu(\phi(I))$.
Properties~\propcap and \propdiam show that $2 - \epsilon/(2\sqrt{k}) \leq \diam(C) \leq \diam(\phi(I))$.
For $p \in I$, each $\phi(p)$ has diameter at most $\epsilon/(10\sqrt{k})$ so we can find two points $p_1, p_2 \in I$ with $d(p_1, p_2) \geq 2 - \epsilon/\sqrt{k}$, showing
that $I$ is not independent.
Finally, Property~\propbad shows this graph has no $K_4$ as a subgraph since any $K_4$ must take two vertices from $V_1$
and two vertices from $V_2$ (the graph spanned by $V_i$ is triangle-free).
To summarize, we have constructed a $K_4$-free graph $G$ on $n$ vertices with independence number at most $\beta n$ and
at least $\frac{1}{8} n^2 ( 1- \alpha)$ edges.
Since this construction holds for any $\alpha,\beta > 0$, we have proved that $\theta(K_4) \ge \frac{1}{8}$.


\textbf{The R\"{o}dl Graph},
\cite{beg-rodl85}.
We do not know if $\rt(n,K_{2,2,2},o(n))$ is $\Omega(n^2)$ or not.
Erd\H{o}s suggested that perhaps some modified version of the Bollob\'{a}s-Erd\H{o}s graph could be used to show it is $\Omega(n^2)$.  In this direction,
R\"{o}dl showed how to modify the Bollob\'{a}s-Erd\H{o}s graph to exclude both $K_4$ and $K_{3,3,3}$, proving that
$\theta(\left\{ K_4, K_{3,3,3} \right\}) \geq \frac{1}{8}$.
The R\"{o}dl Graph is formed by blowing up the Bollob\'{a}s-Erd\H{o}s Graph so that each vertex is blown
up into an independent set of size $t$ and then randomly delete edges from inside each $V_i$
(see Theorem~\ref{randblowup}).  By randomly deleting edges inside each $V_i$, we can destroy (almost) all
short cycles while not changing the density between $V_1$ and $V_2$.  Since the original graph does not contain $K_4$, 
blowing up the graph will not produce any $K_4$'s.  Also, after destroying all short cycles, any graph which is not the union of a bipartite graph with a forest, such as $K_{3,3,3}$,
will not be a subgraph of the final graph. One can check that the independence number of the obtained graph has smaller order of magnitude than its number of vertices.

\section{Construction} \label{secConstruction}

Erd\H{o}s, Hajnal, Simonovits, S\'{o}s,
and Szemer\'{e}di~\cite{rt-erdos94} conjectured (see~\cite[Conjecture 2.9]{rt-erdos94} and~\cite[Conjecture 18]{rt-simonovits01}) that the asymptotically
extremal graphs for $\rt_t(n,K_s,o(n))$ with $s = tq + \ell$ ($1 \leq \ell \leq t$)
have the following structure.  Partition $n$ vertices into $q+1$ classes
$V_0, \ldots, V_q$.  For each pair $\left\{ i,j \right\} \neq \left\{ 0,1 \right\}$ we almost completely join $V_i$ to $V_j$ and between $V_0$ and
$V_1$ we place a graph with density $(\ell - 1)/t + o(1)$.  Lastly, inside each $V_i$ we insert $o(n^2)$ edges.
By optimizing the sizes of the $V_i$s, the number of edges in this graph will be approximately
\begin{align*}
\left( 1 - \frac{2t - \ell + 1}{q(2t - \ell + 1) - \ell + 1} \right) \binom{n}{2}.
\end{align*}
 It was  suggested in \cite{rt-erdos94} that some modified version of the Bollob\'{a}s-Erd\H{o}s graph should be used between $V_0$ and $V_1$, but it was not known
how to reduce the density of the Bollob\'{a}s-Erd\H{o}s graph while still maintaining some useful properties.
Our construction is a modified version of the Bollob\'{a}s-Erd\H{o}s graph where we are able to reduce the density to roughly $2^{- t^2}$.
Unfortunately this is too low to match the conjecture but still enough to give a $\Omega(n^2)$ lower bound on $\rt_t(n,K_s,o(n))$.

Our construction depends on four parameters: two integers $r$ and $z$ and two small constants $\alpha, \beta > 0$.
Fix an integer $r \geq 3$.
Given $\alpha, \beta > 0$, fix $\epsilon$ and $k$ so that Properties~\propsize,  \propsizemulti and \propcap  hold. 
Define $\theta = \epsilon/\sqrt{k}$ and $u = \left\lceil  r/2 \right\rceil$.

For sufficiently large integer $z$, partition the $k$-dimensional unit sphere $\mathbb{S}^k$ into $z$ domains having equal measures and diameter at most $\theta/4$.  Choose a point from each set
and let $P$ be the set of these points.  Let $\phi : P \rightarrow \mathcal{P}(\mathbb{S}^k)$ map points of $P$ to the corresponding domain of the sphere.
The vertices of our hypergraph will be $r$ copies of ordered $u$-tuples of points from $P$.  Define
\begin{align*}
V &= \left\{ (p_1, \ldots, p_u) : p_i \in P \text{ and } d(p_i,p_j) \leq \sqrt{2} - \theta \text{ for all } i \neq j \right\}.
\end{align*}
We will denote vertices in $V$ as $\vec{v}$ and $\left<v^{(1)}, \ldots, v^{(u)}\right>$ as the coordinates of $\vec{v}$.
Let $V_1, \ldots, V_r$ be distinct sets isomorphic to $V$.
Let $\mathcal{H} = \mathcal{H}(r, z, \alpha,\beta)$ be the hypergraph with vertex set 
$V_1 \dot{\cup} \ldots \dot{\cup} V_r$ and the following hyperedges.

For each $1 \leq i \leq r$, make $E = \left\{\vec{v}_1, \ldots, \vec{v}_r\right\} \subseteq V_i$ a hyperedge if 
$\left| E \right| = r$ and for every pair $\vec{v}_{\ell}, \vec{v}_m$ (with $\ell \neq m$),
there exists some coordinate $1 \leq j \leq u$ such that
$d(\vec{v}_{\ell}^{(j)}, \vec{v}_m^{(j)}) \geq 2 - \theta$.
For cross-hyperedges, make $\left\{ \vec{v_1}, \ldots, \vec{v_r} \right\} \subseteq V(\mathcal{H})$ a hyperedge
if $\vec{v}_1 \in V_1, \ldots, \vec{v}_r \in V_r$ and
$d(v_i^{(j)}, v_{\ell}^{(m)}) \leq \sqrt{2} - \theta$ for all $1 \leq i, \ell \leq r$ and all $1 \leq j, m \leq u$.

First, we  claim some properties of $\mathcal{H}$.

\newtheorem{constrnotk4lem}[thmctr]{Lemma}
\begin{constrnotk4lem} \label{constrnotk4lem}
$\mathcal{H}$  contains no hypergraph in $\tkf{4}{r}$ embedded so that $V_i$ contains two core vertices and $V_j$ contains two core vertices with some $i \neq j$.
\end{constrnotk4lem}

\begin{proof}
Assume without loss of generality that there exist $\vec{v}_1, \vec{v}_2 \in V_1$ and $\vec{v}_3, \vec{v}_4 \in V_2$ which are all core vertices.
We will find four points violating Property~\propbad.
By the definition of hyperedges inside $V_1$, there is some coordinate $i$ such that $d(v_1^{(i)}, v_2^{(i)}) \geq 2 - \theta$.  Similarly,
there is some coordinate $j$ such that $d(v_3^{(j)}, v_4^{(j)}) \geq 2 - \theta$.  By the definition of cross-hyperedges, we know that all cross distances
are at most $\sqrt{2} - \theta$.  We therefore obtain four points $v_1^{(i)}, v_2^{(i)}, v_3^{(j)}, v_4^{(j)}$ which contradict Property~\propbad.
\end{proof}

\newtheorem{constrmatchinglem}[thmctr]{Lemma}
\begin{constrmatchinglem} \label{constrmatchinglem}
Let $A_1, A_2 \subseteq P$ with $\left| A_1 \right| = \left| A_2 \right| \geq 2 \beta z$ and let $t = \left| A_1 \right|/2$.
Then there exist $t$ distinct points $p_1, \ldots, p_t \in A_1$ and $t$ distinct points $q_1, \ldots, q_t \in A_2$ such
that $d(p_i, q_i) \geq 2 - \theta$.
\end{constrmatchinglem}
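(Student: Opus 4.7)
The plan is to build the matching greedily, extracting one pair at a time from the remaining sets using Properties \propdiamtnospace\ and \propcapnospace. At each stage I maintain subsets $A_1' \subseteq A_1$ and $A_2' \subseteq A_2$ of equal size and show that, provided $|A_1'| = |A_2'| \ge \beta z$, there exists a pair $(p,q) \in A_1' \times A_2'$ with $d(p,q) \ge 2 - \theta$. Removing such pairs one at a time and iterating starting from $|A_1'|=|A_2'|=|A_1| \ge 2\beta z$, I can safely extract $t = |A_1|/2 \ge \beta z$ pairs before the remaining sets shrink below the threshold $\beta z$, which yields the desired matching.

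The key step is the one-pair extraction. Suppose $|A_1'| = |A_2'| \ge \beta z$. Then their $\phi$-images have measure $\mu(\phi(A_i')) = |A_i'|/z \ge \beta$ in $\mathbb{S}^k$. By Property~\propcap, any spherical cap of diameter $2 - \theta/2$ has measure at most $\beta$; since cap diameter is a monotone function of cap measure, a cap $C$ with $\mu(C) = \beta$ satisfies $\diam(C) \ge 2 - \theta/2$. Restricting $\phi(A_1')$ and $\phi(A_2')$ to subsets of measure exactly $\beta$ and applying Property~\propdiamt, I conclude that there exist $\vec{x} \in \phi(A_1')$ and $\vec{y} \in \phi(A_2')$ with $d(\vec{x},\vec{y}) \ge 2 - \theta/2$.

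To pass from $\vec{x}, \vec{y}$ back to points of $P$, I use that each domain $\phi(p)$ was chosen with diameter at most $\theta/4$. Writing $\vec{x} \in \phi(p)$ and $\vec{y} \in \phi(q)$ with $p \in A_1'$, $q \in A_2'$, the triangle inequality gives
\[
  d(p,q) \ge d(\vec{x},\vec{y}) - d(p,\vec{x}) - d(q,\vec{y}) \ge \left(2 - \tfrac{\theta}{2}\right) - \tfrac{\theta}{4} - \tfrac{\theta}{4} = 2 - \theta,
\]
which is the pair I want. Add $(p,q)$ to the matching, remove $p$ from $A_1'$ and $q$ from $A_2'$, and repeat.

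The main obstacle — which is essentially cosmetic — is that Property~\propdiamt\ is stated for sets of equal measure, whereas the restriction to equal measure $\beta$ subsets is needed to bring Property~\propcap\ to bear; this just requires picking arbitrary measurable subsets of the right size. Once this bookkeeping is handled, the argument is a clean greedy iteration: the budget $2\beta z$ on $|A_i|$ gives exactly enough slack for $t = |A_1|/2$ removal steps before the measure hypothesis fails, yielding the required $t$ disjoint pairs $(p_i, q_i)$.
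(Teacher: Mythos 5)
Your proof is correct and rests on exactly the same geometric core as the paper's: whenever two equal-size subsets of $P$ both have at least $\beta z$ elements, Properties~\propcap\ and \propdiamt\ together with the diameter-$\theta/4$ bound on the domains force a pair at distance at least $2-\theta$. The paper packages this as a contradiction argument (take a maximum matching in the auxiliary bipartite graph, observe that if it has fewer than $t$ edges then the leftover sets each have size $> t \geq \beta z$ and so still contain a cross-pair, which extends the matching), whereas you extract pairs one at a time greedily; this is a cosmetic difference, with the paper's version avoiding the iteration bookkeeping by invoking the key step only once, but the budgetary accounting in your version checks out.
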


\begin{proof}
Let $G$ be the auxiliary bipartite graph on vertex set $A_1 \dot{\cup} A_2$ where $p \in A_1$ and $q \in A_2$ are adjacent if
$d(p,q) \geq 2 - \theta$.  We would like to find a matching of size at least $t$ in $G$.
Let $M$ be a maximum matching in $G$, and assume $\left| E(M) \right| < t$.  Let $G' = G - V(M)$ with $A_1' = A_1 - V(M)$
and $A_2' = A_2 - V(M)$.
We will show that $G'$ does not span an independent set, contradicting that $M$ is a maximum matching.

Since $\left| E(M) \right| < t$, $\left| A_i' \right| > t \geq \beta z$.
Let $B_i = \phi(A_i')$ so that $\mu(B_i) = \left| A_i' \right|/z \geq \beta$.
Let $C$ be a spherical cap of measure $\beta$ so $\mu(B_i) \geq \mu(C)$.
Properties~\propcap and \propdiamt show that $2 - \theta/2 \leq \diam(C) \leq \diamt{B_1, B_2}$.
Since each $\phi(p)$ has diameter at most $\theta/4$, we must have some $p \in A_1'$ and $q \in A_2'$ with $d(p, q) \geq 2 - \theta$.
In other words, $pq$ is an edge of $G'$ which contradicts that $M$ was a maximum matching.
\end{proof}

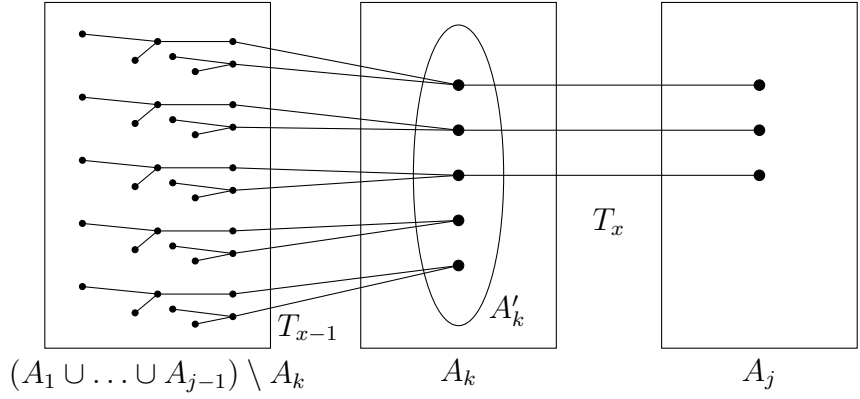
\begin{figure}
\begin{center}
\begin{tikzpicture}
\draw (-1.5,-2.3) rectangle (1.5,2.3);
\draw (0,-2.3) node[below] {$(A_1 \cup \ldots \cup A_{j-1}) \setminus A_k$};
\begin{scope}[xshift=4cm]
  \draw (-1.3,-2.3) rectangle (1.3,2.3);
  \draw (0,0) ellipse (0.6 and 2);
  \draw (0,-2.3) node[below] {$A_k$};
  \draw (0.65,-1.8) node {$A_k'$};
\end{scope}
\begin{scope}[xshift=8cm]
  \draw (-1.3,-2.3) rectangle (1.3,2.3);
  \draw (0,-2.3) node[below] {$A_j$};
\end{scope}
\foreach \y in {1.2,0.6,...,-1.2}{
\begin{scope}[yshift=\y*1.4 cm]
\draw (-1,0.2) -- (0,0.1);
\draw (-0.3,-0.15) -- (0,0.1) -- (1,0.1);
\draw (0.2,-0.1) -- (1,-0.2);
\draw (0.5,-0.3) -- (1,-0.2);
\draw[fill] (-1,0.2) circle (0.04cm);
\draw[fill] (-0.3,-0.15) circle (0.04cm);
\draw[fill] (0,0.1) circle (0.04cm);
\draw[fill] (1,0.1) circle (0.04cm);
\draw[fill] (0.2,-0.1) circle (0.04cm);
\draw[fill] (0.5,-0.3) circle (0.04cm);
\draw[fill] (1,-0.2) circle (0.04cm);
\end{scope}
\draw (1,{0.1+\y*1.4}) -- (4,\y);
\draw (1,{-0.2+\y*1.4}) -- (4,\y);
}
\foreach \x in {1.2,0.6,...,0}{
\draw (4,\x) -- (8,\x);
\draw[fill] (8,\x) circle (0.07cm);
}
\foreach \x in {1.2,0.6,...,-1.2}{
\draw[fill] (4,\x) circle (0.07cm);
}
\draw (2,-1.7) node[below] {$T_{x-1}$};
\draw (6,-0.3) node[below] {$T_x$};
\end{tikzpicture}
\end{center}
\caption{Embedding $T$ during Lemma~\ref{constrembedlem}.}
\end{figure}

\newtheorem{constrembedlem}[thmctr]{Lemma}
\begin{constrembedlem} \label{constrembedlem}
If $A_1, \ldots, A_r \subseteq P$ with $\left|A_i\right| \geq 2^r\beta z$ and $T$ is
a tree on vertex set $[r]$, then
there exist $p_1 \in A_1, \ldots, p_r \in A_r$ such that if $ij \in E(T)$ then $d(p_i, p_j) \geq 2 - \theta$.
\end{constrembedlem}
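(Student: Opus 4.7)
The plan is to prove this by induction on $r$. The base case $r=1$ is vacuous since $T$ has no edges; any point of $A_1$ works.

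For the inductive step, I would pick a leaf $\ell$ of $T$ and let $k$ be its unique neighbor. Since $|A_k|, |A_\ell| \geq 2^r \beta z \geq 2\beta z$, I can take equal-sized subsets of $A_k$ and $A_\ell$ of common size at least $2^r \beta z$ and feed them to Lemma \ref{constrmatchinglem}. This produces a matching $M$ of size at least $2^{r-1}\beta z$ whose edges are pairs $(p,q) \in A_k \times A_\ell$ with $d(p,q) \geq 2 - \theta$. Let $A_k' \subseteq A_k$ denote the set of $A_k$-endpoints of $M$; then $|A_k'| \geq 2^{r-1}\beta z$, which is exactly the threshold needed to apply the inductive hypothesis to the tree $T - \ell$ (a tree on $[r]\setminus\{\ell\}$, i.e., on $r-1$ labels) using the sets $(A_i)_{i \neq \ell}$ with $A_k$ replaced by $A_k'$. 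Each of these has size at least $2^{r-1}\beta z$, so the hypothesis yields points $p_i \in A_i$ (for $i \neq \ell$) with $p_k \in A_k'$ satisfying $d(p_i, p_j) \geq 2 - \theta$ for every edge $ij$ of $T - \ell$. To finish, I would let $p_\ell \in A_\ell$ be the matching partner of $p_k$ in $M$, which supplies the only remaining edge $\{k, \ell\}$ of $T$.

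The doubling of the size threshold from $2^{r-1}\beta z$ to $2^r \beta z$ as a function of $r$ is exactly what compensates for the factor of $2$ lost when passing from $A_k$ to $A_k'$ in Lemma \ref{constrmatchinglem}, so the induction closes cleanly. I do not foresee any real obstacle here: Lemma \ref{constrmatchinglem} is the only ingredient, invoked once per edge of $T$, and the whole argument amounts to a greedy peeling of leaves from $T$ while keeping each candidate set large enough for the next matching step.
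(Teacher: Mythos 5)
Your proof is correct and rests on exactly the same ingredients as the paper's: Lemma~\ref{constrmatchinglem} invoked once per edge of $T$, a leaf-peeling order, and the factor-of-two bookkeeping that is absorbed by the $2^r\beta z$ hypothesis. The paper phrases this as a forward induction, maintaining $2^{r-x}\beta z$ vertex-disjoint partial embeddings of a growing chain of subtrees $T_1\subset\cdots\subset T_{r-1}=T$, whereas you phrase it as a recursion on $r$ that restricts $A_k$ to the matched subset $A_k'$ before recursing on $T-\ell$; these are the same argument with the set $A_k'$ playing the role of the $A_k$-vertices used by the surviving partial embeddings.
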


\begin{proof}
Assume $\left| A_i \right| = 2^r \beta z$.
Let $G$ be the auxiliary graph on vertex set $A_1 \dot{\cup} \dots \dot{\cup} A_r$ where $p \in A_i$ and $q \in A_j$ are adjacent if
$i \neq j$ and $d(p,q) \geq 2 - \theta$.  We would like to find an embedding of $T$ into $G$ such that
$i \in V(T)$ is embedded into $A_i$.  

Let $T_1 \subseteq T_2 \subseteq \dots \subseteq T_{r-1} = T$ be subtrees of $T$ where $T_x$ is formed by deleting
a leaf of $T_{x+1}$.  We prove by induction on $x$ that we can find $2^{r-x} \beta z$ vertex disjoint embeddings
of $T_x$ into $G$ where $i \in V(T_x)$ is embedded into $A_i$.
Since $T_1$ is just a single edge, Lemma~\ref{constrmatchinglem} shows that we can find 
$\left| A_i \right|/2 = 2^{r-1} \beta z$ vertex disjoint embeddings of $T_1$.

Assume $x \geq 2$.  By induction, we can find at least $2^{r-x+1} \beta z$ vertex disjoint embeddings of $T_{x-1}$ into $G$.
Let $j \in V(T_x)$ be the leaf of $T_x$ deleted to form $T_{x-1}$ and let $k$ be the neighbor of $j$ in $T_x$.
Let $A_k'$ be the set of vertices in $A_k$ used by the embeddings of $T_{x-1}$,
so that $\left| A_k' \right| \geq 2^{r-x+1} \beta z$.
We now apply Lemma~\ref{constrmatchinglem} to $A_k'$ and $A_j$ to find a matching between
$A_k'$ and $A_j$ using at least $\left| A_k' \right|/2 \geq 2^{r-x} \beta z$ edges.  Since the vertices of this
matching are distinct, at least $2^{r-x} \beta z$ of the embeddings of $T_{x-1}$ extend to embeddings of $T_x$.
\end{proof}

\newtheorem{constrsmallind}[thmctr]{Lemma}
\begin{constrsmallind} \label{constrsmallind}
For every $s$, $\alpha(\mathcal{H}[V_s]) \leq r^u2^{u+r} \beta z^u$.
\end{constrsmallind}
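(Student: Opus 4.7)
My approach is to argue the contrapositive: assuming $I \subseteq V_s$ is independent with $|I| > r^u 2^{u+r} \beta z^u$, I will construct a hyperedge of $\mathcal{H}[V_s]$ contained in $I$. The key preparatory observation is that since $u = \lceil r/2 \rceil$ is the arboricity of $K_r$, by Nash--Williams one can fix a partition $E(K_r) = F_1 \dot\cup \cdots \dot\cup F_u$ into $u$ forests on $[r]$. To produce a hyperedge it then suffices to find vertices $\vec v_1, \ldots, \vec v_r \in I$ with $d(v_a^{(j)}, v_b^{(j)}) \ge 2 - \theta$ for every $(a,b) \in F_j$: each pair in $[r]$ is then witnessed by some coordinate, and $\{\vec v_1, \ldots, \vec v_r\}$ forms an edge of $\mathcal{H}[V_s]$ by the definition of intra-class hyperedges.

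I would build these vertices one coordinate at a time over $u$ stages. For each $a \in [r]$ maintain a surviving set $R_a^{(j)} = \{\vec v \in I : v^{(m)} = q_a^{(m)} \text{ for all } m \le j\}$ with $R_a^{(0)} = I$. At stage $j$, first identify the subset $H_a^{(j)} \subseteq \pi_j(R_a^{(j-1)})$ consisting of $j$-th coordinate values whose preimage in $R_a^{(j-1)}$ is ``heavy'' (above a threshold to be chosen). Then apply Lemma~\ref{constrembedlem} to the sets $H_1^{(j)}, \ldots, H_r^{(j)}$ with any spanning tree of $[r]$ containing $F_j$ as a subforest, obtaining $q_a^{(j)} \in H_a^{(j)}$ with $d(q_a^{(j)}, q_b^{(j)}) \ge 2 - \theta$ whenever $(a,b) \in F_j$. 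Setting $R_a^{(j)} := R_a^{(j-1)} \cap \pi_j^{-1}(q_a^{(j)})$ is guaranteed by heaviness to retain a controlled fraction of $R_a^{(j-1)}$, preserving the inductive lower bound needed at later stages. After all $u$ stages, any choice of $\vec v_a \in R_a^{(u)}$ completes the hyperedge.

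The main obstacle is the delicate balancing at each stage: the heaviness threshold must be low enough that $|H_a^{(j)}| \ge 2^r \beta z$ (the hypothesis of Lemma~\ref{constrembedlem}) yet high enough that the surviving sets do not collapse before stage $u$. The specific form $r^u 2^{u+r} \beta z^u$ of the initial lower bound on $|I|$ is tailored precisely to make both inequalities hold throughout: the factor $2^r$ comes from Lemma~\ref{constrembedlem}, the factor $2^u$ absorbs the $u$-fold shrinkage of the surviving sets across the stages, and the factor $r^u$ absorbs an $r$-fold heaviness threshold per stage (one for each coordinate position $a$). Provided the bookkeeping works out, at the end every pair of $[r]$ has been witnessed in some coordinate, each $R_a^{(u)}$ is nonempty, and $\{\vec v_a\}_{a=1}^r$ is the desired hyperedge lying in $I$, contradicting independence.
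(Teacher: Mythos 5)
The proposal is correct and takes essentially the same approach as the paper: both proceed coordinate-by-coordinate over the $u$ stages, cover $E(K_r)$ with $u = \lceil r/2 \rceil$ trees (your Nash--Williams phrasing), filter out the light/bad tuples at each stage to preserve a size lower bound on the surviving candidate sets, and then invoke Lemma~\ref{constrembedlem} on the resulting sets of coordinate values. Your factor accounting ($2^r$ from the embedding lemma, $2^u$ and $r^u$ across the $u$ stages of filtering) matches the paper's bookkeeping in spirit, so this is the same argument with slightly different notation.
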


\begin{proof}
 Fix an arbitrary set $X \subseteq V_s$ with $\left| X \right| = r^u2^{u+r}\beta z^u$.
Let $T_1, \ldots, T_u$ be trees for which $V(T_i) = [r]$ and $\cup T_i$ is the complete graph on vertex set
$[r]$. Observe that the only property that we use about our trees is that they cover the edge set of a $K_r$.
Note that if $\vec{v}_1, \ldots, \vec{v}_r \in X$ such that $d(v_i^{(j)}, v_i^{(\ell)}) \geq 2 - \theta$ when
$j\ell \in E(T_i)$, then $\left\{ \vec{v}_1, \ldots, \vec{v}_r \right\}$ forms a hyperedge inside $X$.
We will find these vertices by repeatedly applying Lemma~\ref{constrembedlem}.

Let $0 \leq j < u$.
Assume we have already selected $v_1^{(1)}, \ldots, v_1^{(j)}, v_2^{(1)}, \ldots, v_2^{(j)}, \linebreak[1] \ldots, v_r^{(1)}, \ldots, \linebreak[1] v_r^{(j)}$, that is coordinates
$1$ through $j$ for all $r$ vertices to be found.
For each $i$, define the set of candidates to continue the future vertex $\vec{v}_i$ as
\begin{align*}
 C_i^{(j)} = \left\{ \left<v_i^{(1)}, \ldots, v_i^{(j)}, q_{j+1}, \ldots, q_u\right> \in X : q_{j+1}, \ldots, q_u \in P \right\}.
\end{align*}
Initially, $C_i^{(0)} = X$.
Throughout the selection process we maintain that the size of $\left| C_i^{(j)} \right|$ is at least
$r^{u-j}2^{u-j+r} \beta z^{u-j}$.

We now show how to select $v_1^{(j+1)}, \ldots, v_r^{(j+1)}$.
For $1 \leq i \leq r$, call a $(j+1)$-tuple $(v_i^{(1)}, \ldots, v_i^{(j)}, p)$ {\bf bad} if
\begin{align*}
\left| \left\{ \left<v_i^{(1)}, \ldots, v_i^{(j)}, p, q_{j+2}, \ldots, q_u \right> \in C_i^{(j)} : q_{j+2}, \ldots, q_u \in P \right\} \right|
 \\ < r^{u-j-1} 2^{u-j-1 + r} \beta z^{u-j-1}.
\end{align*}
Form $D_i$ by deleting all vertices $\vec{w}$ from $C_i^{(j)}$ where the first $j+1$ coordinates of $\vec{w}$ form a bad tuple.
Counting the number of vertices we delete, there are $r$ choices for $i$, there are at most $z$ choices for $p$, and there
are at most $r^{u-j-1} 2^{u-j-1+r} \beta z^{u-j-1}$ choices for the rest of the coordinates.
Thus the number of vertices we delete is at most $r^{u-j}2^{u-j-1+r} \beta z^{u-j}$ so $\left| D_i \right| \geq r^{u-j}2^{u-j-1+r} \beta z^{u-j}$.

Now define
\begin{align*}
A_i = \left\{ p \in P : \exists q_{j+2}, \ldots, q_u \in P \text{ where } \left<v_i^{(1)}, \ldots, v_i^{(j)}, p, q_{j+2}, \ldots, q_u\right> \in D_i \right\}.
\end{align*}
If $\left| A_i \right| < 2^r \beta z$, then $\left| D_i \right| < 2^r \beta z^{u-j} \leq 2^{u-j+r} \beta z^{u-j}$ which is a contradiction.
Now apply Lemma~\ref{constrembedlem} to $A_1, \ldots, A_r$ and $T_{j+1}$ to obtain $v_1^{(j+1)} \in A_1, \ldots, v_r^{(j+1)} \in A_r$.
Since none of the tuples $(v_i^{(1)}, \ldots, \linebreak[1] v_i^{(j+1)})$ are bad,
\begin{align*}
\left| C_i^{(j+1)} \right| \geq r^{u-j-1}2^{u-j-1+r} \beta z^{u-j-1}
\end{align*}
for every $i$.
\end{proof}

\newtheorem{constrlargeedges}[thmctr]{Lemma}
\begin{constrlargeedges} \label{constrlargeedges}
Let $\mathcal{E} = \left\{ \left\{ \vec{v}_1, \ldots, \vec{v}_r \right\} \in \mathcal{H} : \vec{v}_i \in V_i \right\}$.
Then there exists a constant $c$ depending only on $r$ such that
\begin{align*}
\left| V(\mathcal{H}) \right| \leq r2^{-\binom{u}{2}} z^u
\end{align*}
and
\begin{align*}
\left| \mathcal{E} \right| \geq 2^{-\binom{ru}{2}} z^{ru} - c\alpha z^{ru}
  \geq 2^{r\binom{u}{2} - \binom{ru}{2}} \left( \frac{\left| V(\mathcal{H}) \right|}{r} \right)^r -
   c\alpha \left| V(\mathcal{H}) \right|^r.
\end{align*}
\end{constrlargeedges}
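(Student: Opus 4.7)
My plan has three steps. First, I would make the key reformulation: a cross-hyperedge $\{\vec v_1,\dots,\vec v_r\}\in\mathcal{E}$ is completely determined by the ordered tuple $(p_1,\dots,p_{ru})\in P^{ru}$ obtained by concatenating the coordinates of $\vec v_1,\dots,\vec v_r$. The requirements $\vec v_i\in V$ and the cross-distance condition together force every one of the $\binom{ru}{2}$ pairs of entries to satisfy $d\le\sqrt2-\theta$. Similarly $|V(\mathcal{H})|=r|V|$ where $|V|$ counts ordered $u$-tuples in $P^u$ with pairwise distances at most $\sqrt2-\theta$. So the lemma reduces to lower-bounding the number of admissible $(ru)$-tuples and upper-bounding the number of admissible $u$-tuples.

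For the lower bound on $|\mathcal{E}|$ I would build the tuple one coordinate at a time. Having chosen $p_1,\dots,p_{j-1}$ with pairwise distances $\le\sqrt2-\theta\le\sqrt2$, the admissible $p_j$ lies in $P\cap\bigcap_{i<j}C_i$, where $C_i$ is the cap of radius $\sqrt2-\theta$ centered at $p_i$. Property~\propsizemulti gives $\mu\bigl(\bigcap_{i<j}C_i\bigr)\ge 2^{-(j-1)}-(j-1)\alpha$. Since each domain of $P$ has diameter $\theta/4$, the count of $P$-points in this intersection is at least $z\mu(\bigcap_{i<j}C_i)$ minus a boundary term of lower order absorbable into the $\alpha$-error for $z$ large. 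Multiplying over $j=1,\dots,ru$ and using $\sum_{j=1}^{ru}(j-1)=\binom{ru}{2}$,
\[
|\mathcal{E}|\ge\prod_{j=1}^{ru}\bigl(2^{-(j-1)}z-(j-1)\alpha z\bigr)\ge 2^{-\binom{ru}{2}}z^{ru}-c\alpha z^{ru}
\]
for a constant $c=c(r)$.

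For the upper bound $|V|\le 2^{-\binom{u}{2}}z^u$ I would use a sign-flip symmetrization on the continuous model; the discretization from $P$ contributes only an $O(\alpha)$ error for $z$ large. The group $\{\pm1\}^u$ acts on $(\mathbb{S}^k)^u$ by coordinatewise negation, preserving Haar measure. For fixed $(p_i)$, the condition $d(\epsilon_i p_i,\epsilon_j p_j)\le\sqrt2-\theta$ amounts to $\epsilon_i\epsilon_j\langle p_i,p_j\rangle\ge\sqrt2\theta-\theta^2/2>0$ for all $i<j$, and this is satisfied by either $0$ or exactly $2$ sign patterns (the $2$ occurs when the pattern $(\mathrm{sign}\langle p_i,p_j\rangle)_{i<j}$ is a coboundary $\delta_i\delta_j$). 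Averaging over $\epsilon$ therefore gives $\mu(V_{\mathrm{cont}})=2^{1-u}\Pr_p[\text{sign-pattern is a coboundary and all }|\langle p_i,p_j\rangle|\ge\sqrt2\theta-\theta^2/2]$. Coboundary patterns form a single orbit of size $2^{u-1}$ in $\{\pm1\}^{\binom{u}{2}}$, so by the measure invariance $\Pr_p[\text{coboundary}]=2^{u-1}\Pr_p[\text{all }\langle p_i,p_j\rangle>0]$, and in the high-dimensional regime guaranteed by our choice of $k$ the signs of pairwise inner products become nearly independent, giving $\Pr_p[\text{all }>0]\le 2^{-\binom{u}{2}}+O(\alpha)$. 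Combining yields $\mu(V_{\mathrm{cont}})\le 2^{-\binom{u}{2}}+O(\alpha)$, hence $|V|\le 2^{-\binom{u}{2}}z^u$.

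To combine, the upper bound on $|V(\mathcal H)|$ gives $z^{ru}\ge 2^{r\binom{u}{2}}(|V(\mathcal H)|/r)^r$, which converts the leading term in the lower bound on $|\mathcal{E}|$ into $2^{r\binom{u}{2}-\binom{ru}{2}}(|V(\mathcal H)|/r)^r$. A matching lower bound $|V|\ge(2^{-\binom{u}{2}}-O(\alpha))z^u$, proved exactly like the lower bound on $|\mathcal E|$ above but for $u$-tuples, yields $z^{ru}\le O_r(1)\cdot|V(\mathcal H)|^r$, which converts the error $c\alpha z^{ru}$ into $c\alpha|V(\mathcal H)|^r$ (absorbing the $r$-dependent factor into $c$). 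The main obstacle is the upper bound on $|V|$: a naive pointwise sequential argument fails because cap-intersections can exceed $1/2^{u-1}$ when the centers cluster, and the sign-flip symmetrization is needed to translate the bound into a single global statement about sign-patterns of inner products, which matches the Gaussian independence model in the high-$k$ regime.
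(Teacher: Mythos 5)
Your lower bound on $|\mathcal{E}|$ (sequential selection of the $ru$ coordinates using Property~\propsizemulti) and the final conversion of the $z^{ru}$ bound into a $|V(\mathcal{H})|^r$ bound match the paper's argument. The divergence is in the upper bound on $|V(\mathcal{H})|$: the paper simply reads it off Property~\propsizemulti, implicitly treating the cap-intersection measure as close to $2^{-t}$ from both sides, whereas you correctly observe that a naive sequential cap-intersection count only yields exponent $u-1$ rather than $\binom{u}{2}$ (intermediate intersections can be large when the chosen centers cluster) and propose a sign-flip symmetrization as the fix. That symmetrization buys nothing, though: unwinding your chain $\mu(V_{\mathrm{cont}})=2^{1-u}\Pr[\text{coboundary and all large}]\le 2^{1-u}\Pr[\text{coboundary}]=\Pr[\text{all inner products positive}]$ shows it collapses to the trivial containment $V_{\mathrm{cont}}\subseteq\{\text{all inner products positive}\}$. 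The substantive step is the assertion $\Pr[\text{all inner products positive}]\le 2^{-\binom{u}{2}}+O(\alpha)$ for $k$ large, which you attribute to high-dimensional near-independence; this is true (the scaled pairwise inner products of i.i.d.\ uniform points on $\mathbb{S}^k$ converge jointly to i.i.d.\ Gaussians), but it is not among the stated properties (P1)--(P6) and you do not prove it, and it is precisely the unstated upper-bound companion of~\propsizemulti that the paper itself relies on. In short: same gap as the paper, surfaced more honestly, but the sign-flip machinery should be cut and, if you want a self-contained argument, replaced by a direct proof of the Gaussian approximation of pairwise inner products.
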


\begin{proof}
By Property~\propsizemulti, each $V_i$ has size at most $z \prod_{i=1}^{u-1} \left( \frac{z}{2^i} - i \alpha z \right)$
so the number of vertices is at most $r2^{-\binom{u}{2}} z^u$.
Using Property~\propsizemulti there are at least
\begin{align*}
z \prod_{i=1}^{ru-1} \left( \frac{z}{2^i} -i\alpha z \right)
\end{align*}
choices of $ru$ points on the sphere with pairwise distance at most $\sqrt{2} - \theta$.
Each of these $ru$-sets of points form a cross-hyperedge.
\end{proof}

\section{Proofs of Theorems \ref{mainind3k5}, \ref{maintkthm}, and \ref{constrblowup}} \label{secTheorems}

We now turn our attention to proving Theorems~\ref{mainind3k5}, \ref{maintkthm}, and \ref{constrblowup}.
Consider the construction
$\mathcal{H}$ from Section~\ref{secConstruction} and assume $\tk{r+2}{r}$ is a subhypergraph.
Lemma~\ref{constrnotk4lem} tells us it is impossible to have two core vertices in two different classes,
so we must have three core vertices in some part.  $\mathcal{H}$ itself may contain a copy of $\tk{3}{r}$ inside
one part, but by using an idea of R\"{o}dl~\cite{beg-rodl85} we are able to eliminate this possibility by blowing up the hypergraph
$\mathcal{H}$.  In~\cite{beg-rodl85}, R\"{o}dl proved a variant of the following theorem for graphs and the special case
when $\mathcal{F}$ is a cycle.

\newtheorem{randblowup}[thmctr]{Theorem}
\begin{randblowup} \label{randblowup}
Let $\mathcal{H}$ be an $r$-uniform hypergraph on $n$ vertices.  Let $0 < \gamma < 1$ and let $\ell$ be a positive integer.
Then there exists a $t = t(\mathcal{H},\ell,\gamma,r)$ and an $r$-uniform hypergraph $\mathcal{G}$ with vertex set 
$V(\mathcal{H}) \times [t]$ with the following properties.
\begin{itemize}
\item For all $\left\{ a_1, \ldots, a_r \right\} \in \mathcal{H}$ and all sets
 $U_i \subseteq \left\{ a_i \right\} \times [t]$ with $\left| U_i \right| \geq \gamma t$ for each $1 \leq i \leq r$,
there exists at least one hyperedge of $\mathcal{G}$ with one vertex in each $U_i$.
\item $\mathcal{G}$ does not contain as a subhypergraph any
 $v$-vertex hypergraph $\mathcal{F}$ with $m$ edges where $v \leq \ell$ and $v + (1+\gamma - r)(m-1) < r$.
\end{itemize}
\end{randblowup}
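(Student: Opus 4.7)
The plan is to construct $\mathcal{G}$ as a random blow-up of $\mathcal{H}$ followed by a small alteration. Let $p := K t^{1-r}$, where $K = K(\gamma, r, \ell, n)$ is a large constant to be fixed later and $t$ is taken sufficiently large. For each $\{a_1, \ldots, a_r\} \in \mathcal{H}$ and each $(j_1, \ldots, j_r) \in [t]^r$, independently include the lifted hyperedge $\{(a_1, j_1), \ldots, (a_r, j_r)\}$ in $\mathcal{G}$ with probability $p$. For the density property, by monotonicity it suffices to handle sets with $|U_i| = \lceil \gamma t \rceil$. Fix $e = \{a_1, \ldots, a_r\} \in \mathcal{H}$ and $U_i \subseteq \{a_i\} \times [t]$ of this size. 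The number of lifts of $e$ inside $U_1 \times \cdots \times U_r$ is binomial with mean $p(\gamma t)^r = K \gamma^r t$, so by a Chernoff bound it falls below $K\gamma^r t/2$ with probability at most $\exp(-c K \gamma^r t)$ for an absolute $c > 0$. A union bound over the at most $|\mathcal{H}| \cdot 2^{rt}$ choices of $(e, U_1, \ldots, U_r)$ shows that, once $K$ is sufficiently large in terms of $r$ and $\gamma$, with probability tending to $1$ as $t \to \infty$ every such count exceeds $K\gamma^r t / 2$.

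Next I bound the total number $X$ of copies of forbidden hypergraphs in the blow-up. Since $v \leq \ell$, only finitely many isomorphism types of forbidden $\mathcal{F}$ arise. For a fixed such type with $v$ vertices and $m$ edges, the number of injective maps $V(\mathcal{F}) \to V(\mathcal{G})$ is at most $(nt)^v$, and, since distinct edges of $\mathcal{F}$ give distinct potential edges of $\mathcal{G}$ which are included independently, each injection is a copy with probability at most $p^m$. Hence the expected count is at most $n^v K^m t^{v + m(1-r)}$. Rearranging the hypothesis $v + (1+\gamma - r)(m-1) < r$ yields $v + m(1-r) < 1 - \gamma(m-1)$; since $m \geq 2$ (any $m = 1$ configuration needs $v \geq r$, violating $v < r$) and $\gamma > 0$, the right-hand side is strictly below $1$, forcing the integer $v + m(1-r) \leq 0$. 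Thus the expected count per type is bounded by a constant depending only on $n, K, \ell, r$, and $\mathbb{E}[X] = O(1)$. Markov's inequality then gives $X < K \gamma^r t / 4$ with probability tending to $1$.

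For $t$ large enough both events hold simultaneously with positive probability. Delete one hyperedge of $\mathcal{G}$ per copy of a forbidden $\mathcal{F}$; this destroys every such copy while removing at most $X$ hyperedges overall, so each product $U_1 \times \cdots \times U_r$ retains at least $K \gamma^r t / 2 - K \gamma^r t / 4 = K \gamma^r t / 4 \geq 1$ lift of $e$. Hence both properties hold for the altered $\mathcal{G}$. The main obstacle is the tension between the two conclusions: the first bullet demands $p$ to be large (to keep lifts inside each $\prod U_i$), while the second bullet demands $p$ to be small (to keep forbidden configurations rare); the choice $p = \Theta(t^{1-r})$ is precisely the balance point at which the hypothesis $v + (1+\gamma - r)(m-1) < r$ forces forbidden copies into $O(1)$ expected count, small enough relative to the $\Theta(t)$ density margin that the alteration step finishes the job without damaging the first property.
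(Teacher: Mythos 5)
Your proof is correct and follows essentially the same approach as the paper: take the $t$-blowup of $\mathcal{H}$, keep each lifted hyperedge independently with probability $p$ of order $t^{1-r}$, bound the expected number of forbidden small configurations, delete one edge from each, and protect the density property with a Chernoff-plus-union-bound argument. The only cosmetic difference is the choice of $p$ (you use $Kt^{1-r}$ with a large constant $K$ and exploit the integer gap $v+m(1-r)\leq 0$, while the paper uses $p=t^{1+\gamma-r}$ and argues the expected forbidden count is $o(pt^r)$); both land in the same working window.
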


\begin{proof}
Let $\mathcal{H}'$ be the $t$-blowup of $\mathcal{H}$.  That is, $V(\mathcal{H}') = V(\mathcal{H}) \times [t]$ and
the hyperedges are $\left\{ \left\{ (a_1, i_1), (a_2, i_2), \ldots, (a_r, i_r) \right\} : \left\{ a_1, \ldots, a_r \right\} \in \mathcal{H}, 1 \leq i_1, \ldots, i_r \leq t \right\}$.
Let $\mathcal{H}''$ be a random subhypergraph of $\mathcal{H}'$ where each hyperedge is chosen independently with probability $p = t^{1+\gamma-r}$
(note that $r \geq 2$ and $\gamma$ is small so that $p < 1$).
Let $\mathcal{F}$ be a $v$-vertex hypergraph with $\left| \mathcal{F} \right| = m$ and where
$v + (1+\gamma - r) (m-1) < r$.
The expected number of copies of $\mathcal{F}$ in $\mathcal{H}''$ is bounded by $c_1 t^v p^m = o(p t^r)$ where
$c_1$ is some constant depending only on $\mathcal{H}$ and $\ell$.
We now delete one hyperedge from each copy of $\mathcal{F}$ in $\mathcal{H}''$.
There are at most $2^{\ell^r}$ such hypergraphs $\mathcal{F}$ so
we can make $t$ sufficiently large so that we delete fewer than $\frac{\gamma^r}{2} p t^r$ hyperedges.
$\mathcal{G}$ is the resulting graph which now satisfies the second property.

Now fix a hyperedge $E = \left\{ a_1, \ldots, a_r \right\} \in \mathcal{H}$ and $U_i \subseteq \left\{ a_i \right\} \times [t]$
with $\left| U_i \right| = \gamma t$ for $1 \leq i \leq r$.
We now show that the probability that all blowups of the hyperedge $E$ intersecting all $V_i$ are deleted
is exponentially small.
Before deletion, the expected number of blowups of $E$ where the copy of $a_i$ appears in $U_i$ for each $i$ is $p(\gamma t)^r$.
By Chernoff's Inequality, the probability that there are at most $\frac{1}{2} p (\gamma t)^r$ such blowups of $E$ is bounded
by $e^{-c_2 p t^r}$ where $c_2$ is some constant depending only on $\gamma$.  Since we  delete only 
$\frac{1}{2}p (\gamma t)^r$ hyperedges in total,
the probability that we delete all blowups of $E$ where the copy of $a_i$ appears in $U_i$ for each $i$ is at most $e^{-c_2 p t^r}$.

We now use the union bound to bound the probability that there is some hyperedge 
$E = \left\{ a_1, \ldots, a_r \right\} \in \mathcal{H}$ and some 
$U_i \subseteq \left\{ a_i \right\} \times [t]$ with $\left| U_i \right| = \gamma t$ for $1 \leq i \leq r$
where we deleted all blowups of the edge $E$ where the copy of $a_i$ appears in $U_i$ for each $i$.
This probability is bounded by
\begin{align*}
\left| \mathcal{H} \right| \binom{t}{\gamma t}^r e^{-c_2 pt^r} &\leq
\left| \mathcal{H} \right| \left( \frac{e}{\gamma} \right)^{\gamma r t} e^{ - c_2 p t^r } 
\leq \left| \mathcal{H} \right| e^{c_3 t} e^{-c_4 t^{1+\gamma} }
=  o(e^{-t})
\end{align*}
where $c_3$ and $c_4$ are constants depending only on $\gamma$ and $r$.
\end{proof}

By combining the construction from Section~\ref{secConstruction} and the previous theorem, we  prove Theorem~\ref{constrblowup}.

\begin{proof}[Proof of Theorem~\ref{constrblowup}]
Let $z = N$ and let $\mathcal{H} = \mathcal{H}(r, z, \alpha, \beta)$ be the hypergraph constructed in 
Section~\ref{secConstruction} and $V_1, \ldots, V_r$ the partition of the vertex set of $\mathcal{H}$.
Let $\mathcal{E}_1$ be the set of cross-hyperedges, that is 
$\mathcal{E}_1 = \left\{ \left\{ \vec{v}_1, \ldots, \vec{v}_r \right\} \in \mathcal{H} : \vec{v}_i \in V_i \right\}$
and let $\mathcal{E}_2 = \mathcal{H} - \mathcal{E}_1$ so $\mathcal{E}_2$ is the set of hyperedges which are inside some $V_i$.
Let $\gamma = \beta$ and $\ell = r^3$ and apply Theorem~\ref{randblowup} to $\mathcal{E}_2$ to obtain $\mathcal{E}_2'$ where $V(\mathcal{E}_2') = V(\mathcal{H}) \times [t]$.
Let $\mathcal{G}$ be $\mathcal{E}_2'$ together with all the hyperedges 
\begin{align*}
\left\{ \left\{ (\vec{v}_1,a_1),\ldots,(\vec{v}_r,a_r) \right\} : \left\{ \vec{v}_1, \ldots, \vec{v}_r \right\} \in \mathcal{E}_1, 1 \leq a_i \leq t \right\}.
\end{align*}

Let $m = \left| V_i \right| t \approx 2^{-u(u-1)/2} z^u t$ so that $\mathcal{G}$ has $rm$ vertices, and let $W_i = V_i \times [t]$. 
Now we verify the claimed properties of $\mathcal{G}$.

(i) By Lemma~\ref{constrnotk4lem}, $\mathcal{H}$ contains no hypergraph in $\tkf{4}{r}$ embedded so that
$V_i$ has two core vertices and $V_j$ has two core vertices. Since the blow up preserves this, the same holds for  
$\mathcal{G}$, $W_i$, and $W_j$.

(ii) By Lemma~\ref{constrlargeedges}, $\left| \mathcal{E}_1 \right| \geq 2^{-\binom{ru}{2}} z^{ru} - c_2\alpha z^{ru}$.
Because during blow up we keep all cross hyperedges, 
$$e(\mathcal{G})\ge 2^{-\binom{ru}{2}} z^{ru} t^r - c_2\alpha z^{ru} t^r=\left(2^{r\binom{u}{2}-\binom{ru}{2}}-  c_2\alpha 2^{u(u-1)r/2}\right)m^r $$  where $c_2$
is some constant depending only on $r$.
  
(iii) Theorem~\ref{randblowup} shows that $\mathcal{G}[W_i]$ does not contain as a subhypergraph any hypergraph
$\mathcal{F}$ with $\left| V(\mathcal{F}) \right| \leq r^3 = \ell$ and $\left| V(\mathcal{F}) \right| + (1-r)(\left| \mathcal{F} \right|-1) < r$.

(iv) Let $I$ be a vertex set in $\mathcal{G}[W_1]$ with $\left| I \right| = r^u 2^{u+r+1} \beta z^u t$.
For $\vec{v} \in V_1$, call $\vec{v}$ \textbf{$\gamma$-bad} if there are fewer than $\gamma t$ indices $1\leq i \leq t$ such that $(\vec{v},i) \in I$.
Form $I'$ by deleting all pairs $(\vec{v},i)$ from $I$ where $\vec{v}$ is $\gamma$-bad.  We deleted at most $z^u \gamma t = z^u \beta t$ pairs so $\left| I' \right| \geq r^u 2^{u+r} \beta z^u t$.
Define $A = \left\{ \vec{v} \in V_1 : (\vec{v},i) \in I' \text{ for some } 1 \leq i \leq t \right\}$.  Then $\left| A \right| \geq r^u 2^{u+r} \beta z^u$, so by Lemma~\ref{constrsmallind} we must
have a hyperedge $\left\{ \vec{v}_1, \ldots, \vec{v}_r \right\}$ contained in $\mathcal{H}[A] \subseteq \mathcal{E}_2$.
Define $B_i = (\left\{ \vec{v}_i \right\} \times [t]) \cap I'$.  Since no $\vec{v}_i$ is  $\gamma$-bad we have $\left| B_i \right| \geq \gamma t$
for every $1 \leq i \leq r$.
By Theorem~\ref{randblowup} there exists a hyperedge of $\mathcal{E}_2'[W_1] \subseteq \mathcal{G}[W_1]$ with one vertex in each $B_i$, which is a hyperedge contained in $I$.
This shows that the independence number of $\mathcal{G}[W_i]$
 is at most $r^{u}2^{u+r+1} \beta z^u t$ for each $1 \leq i \leq r$, which implies that
$\mathcal{G}$ has independence number at most $r^{u+1}2^{u+r+1} \beta z^u t \leq c_1 \beta m$,
 where $c_1$ is a constant depending only on $r$.
\end{proof}

\begin{proof}[Proof of Theorem~\refmaintkthm]
Let $\mathcal{G}$ be the construction from Theorem~\ref{constrblowup} and assume that $\tk{r+2}{r}$ is a subhypergraph.
Since we cannot have two core vertices in two different parts, the copy of $\tk{r+2}{r}$ must have
three core vertices in one part.  Let $\mathcal{F} = \tk{3}{r}$.  Then $\left| \mathcal{F} \right| = 3$ and
$\left| V(\mathcal{F}) \right| = 3 + 3(r-2) = 3r - 3 < r + 2(r-1) = r + (r-1)(\left| \mathcal{F} \right| - 1) = 3r-2$ which contradicts Theorem~\ref{constrblowup} (iii).

Let $n = \left| V(\mathcal{G}) \right|$.
From Theorem~\ref{constrblowup}, we know that $n = rm$ and that
\begin{align*}
\left| \mathcal{G} \right| \geq 2^{r\binom{u}{2} - \binom{ru}{2}} m^r - c_1 \alpha m^r = 
   2^{r\binom{u}{2} - \binom{ru}{2}} \left( \frac{n}{r} \right)^r - c_2 \alpha n^r,
\end{align*}
where $c_1$ and $c_2$ are constants depending only on $r$.  Thus for any $\alpha > 0$, we know that
\begin{align*}
\lim_{\beta \rightarrow 0} \lim_{n \rightarrow \infty} \frac{\rt(n,\tk{r+2}{r}, \beta n)}{n^r}
&\geq \lim_{\beta \rightarrow 0} \lim_{n \rightarrow \infty} \frac{ 2^{r\binom{u}{2}-\binom{ru}{2}} \left(\frac{n}{r}\right)^{r} - c_2\alpha n^{r} }{n^r} \\
&\geq 2^{r\binom{u}{2} - \binom{ru}{2}} \left( \frac{1}{r} \right)^r - c_2 \alpha,
\end{align*}
yielding  $\theta(\tk{r+2}{r}) \geq 2^{r\binom{u}{2} - \binom{ru}{2}} (1/r)^r.$
\end{proof}

\begin{proof}[Proof of Theorem~\refmaintkfthm]
The proof is similar to the proof of Theorem~\refmaintkthm. No copy of a hypergraph in $\tkf{2r}{r}$ can have two core vertices in two different parts
so it must have at least $r+1$ core vertices in a single part.
To complete the proof, we just need to show that every minimal hypergraph $\mathcal{F}$ in $\tkf{r+1}{r}$
satisfies $\left| V(\mathcal{F}) \right| \leq r + (r-1)(\left| \mathcal{F} \right|-1)$.  Let $v_1, \ldots, v_{r+1}$ be the core vertices of $\mathcal{F}$.
For $1 \leq a < b \leq r+1$, since $\mathcal{F} \in \tkf{r+1}{r}$
there exists some hyperedge containing both $\vec{v}_a$ and $\vec{v}_b$.
Let $E_{a,b}$ be a hyperedge of containing both $\vec{v}_a$ and $\vec{v}_b$ (if there are  more than
one such hyperedges, pick one arbitrarily.)
Now consider the ordering 
\begin{align*}
E_{1,2}, E_{1,3}, \ldots, E_{1,r+1}, E_{2,3}, \ldots, E_{2,r+1}, E_{3,4}, \ldots, E_{r,r+1}.
\end{align*}
Since $\mathcal{F}$ is minimal, all hyperedges of $\mathcal{F}$ appear
in the ordering somewhere.  Now let $F_1, \ldots, F_m$ be a list of the hyperedges of $\mathcal{F}$ where for each hyperedge $D \in \mathcal{F}$,
we keep the first copy of $D$ in the ordering and remove all other copies.
By the choice of ordering, each $F_i$ must use at least one vertex from the previous hyperedges.
Therefore, $\left| V(\mathcal{F}) \right| \leq r + (r-1)(m-1)$.  In fact,
the last hyperedge must use at least two previous vertices so we can reduce the bound by one to $\left| V(\mathcal{F}) \right| \leq r + (r-1)(m-1) - 1$.  
\end{proof}

The \textbf{shadow graph} of a hypergraph $\mathcal{H}$ is a graph $G$ with $V(G) = V(\mathcal{H})$ and $xy \in E(G)$  if there exists some hyperedge $E$ of $\mathcal{H}$
with $x,y \in E$.
We will now show that Theorem~\ref{mainind3k5} follows by looking at the shadow graph of the hypergraph from Theorem~\ref{constrblowup}.

\begin{proof}[Proof of Theorem~\ref{mainind3k5}]
Let $r = t$ and let $\mathcal{G}$ be the hypergraph constructed in Theorem~\ref{constrblowup} with parts
$W_1, \ldots, W_r$.  Let $G$ be the shadow graph of $\mathcal{G}[W_1 \cup W_2 \cup \ldots \cup W_{\ell}]$, so
we  take only  the shadow graph of the first $\ell$ parts.  Then $\alpha_t(G)$ is small because any hyperedge
inside $\mathcal{G}[W_i]$ turns into a copy of $K_t$ in $G[W_i]$ for $1 \leq i \leq \ell$.
Assume $G$ contains $K_{t+\ell}$.  It is not possible to have two of the vertices in $W_i$ and two of the vertices in $W_j$ with $i \neq j$ because then $\mathcal{G}$ would contain a $\tkf{4}{r}$ arranged
so that two core vertices are in $W_i$ and two core vertices are in $W_j$.  
Thus we can assume without loss of generality that $G[W_1]$ contains $K_{t+1}$.  This implies
that $\mathcal{G}[W_1]$ contains a hypergraph in $\tkf{r+1}{r}$ which was excluded by the proof of 
Theorem~\refmaintkfthm.

To compute the number of edges of $G$, we must use Property~\propsize.
Edges between $W_i$ and $W_j$ are chosen by picking $2u$ points within distance $\sqrt{2} - \theta$ on the sphere
and then blowing each vertex up into size $t$.  Therefore, we have at least
$t^2 z \prod_{i = 1}^{2u - 1} \left( \frac{z}{2^i} - i\alpha z \right)$ edges between $W_i$ and $W_j$.  Thus
\begin{align} \label{mainedgelowereq}
\left| E(G) \right| \geq \binom{\ell}{2} 2^{-\binom{2u}{2}} z^{2u} t^2 - c_1\alpha z^{2u}t^2,
\end{align}
where $c_1$ is some constant depending only on $r$.
Each $W_i$ has size at most $2^{-\binom{u}{2}} z^u t$ so $G$ has at most $\ell 2^{-\binom{u}{2}} z^u t$ vertices.
Thus
\begin{align} \label{mainvertexuppereq}
\frac{2^{\binom{u}{2}}}{\ell} \left| V(G) \right| &\leq z^u t.
\end{align}
Combining \eqref{mainedgelowereq} with \eqref{mainvertexuppereq}, we obtain
\begin{align*}
\left| E(G) \right| &\geq \binom{\ell}{2} 2^{-\binom{2u}{2}} \left( \frac{2^{\binom{u}{2}}}{\ell}
                            \left| V(G) \right| \right)^2 - c_2\alpha \left| V(G) \right|^2 \\
                    &\geq \frac{1}{2} \frac{\ell(\ell - 1)}{\ell^2} 2^{u(u-1) - u(2u-1)} \left| V(G) \right|^2 
                            - c_2\alpha \left| V(G) \right|^2 \\
                    &\geq \frac{1}{2} \left( 1 - \frac{1}{\ell} \right) 2^{-u^2} \left| V(G) \right|^2 
                            - c_2\alpha \left| V(G) \right|^2
\end{align*}
for some constant $c_2$ depending only on $r$.
\end{proof}

\section{Lower bounds on the  Ramsey-Tur\'{a}n threshold functions} \label{secSmallInd}

The main tool to prove Theorem~\ref{k3tkthm} is the method of dependent random choice.
It is a simple yet surprisingly powerful technique which has found applications in
Extremal Graph Theory, Ramsey Theory, Additive Combinatorics, and Combinatorial Geometry.  
Early versions of this technique were proved and applied by several researchers,
starting with Gowers, Kostochka, R\"{o}dl, and Sudakov.
Gowers \cite{drc-gowers98} used a variant of dependent random choice in an alternate proof of Szemer\'{e}di's Theorem \cite{sze-szemeredi75}
for four-term arithmetic progressions, Kostochka and R\"{o}dl \cite{drc-kostochka01} used it to investigate bipartite Ramsey numbers,
and Sudakov \cite{rt-sudakov03} used it to prove $\rt(n,K_4,2^{-w(n) \sqrt{\log n}}) = o(n^2)$, where $w(n)$ is arbitrary function tending to infinity.
Since then, many other applications of the dependent random choice method have been found (see~\cite{drc-fox} for a survey).

\newtheorem{deprand}[thmctr]{Lemma}
\begin{deprand} \label{deprand}
(Dependent Random Choice, Lemma 2.1 in~\cite{drc-fox}).
Let $a,m,n,r,t$ be positive integers.  Let $G$ be an $n$-vertex graph with average
degree $d := 2\left| E(G) \right|/n$.  If
\begin{align*}
\frac{d^t}{n^{t-1}} - \binom{n}{r} \left( \frac{m}{n} \right)^t \geq a,
\end{align*}
then $G$ contains a subset $U$ of at least $a$ vertices such that any $r$ vertices in $U$ have at least $m$
common neighbors.
\end{deprand}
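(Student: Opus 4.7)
The plan is to prove the lemma by the namesake probabilistic technique: choose a random multiset $T = (v_1, \ldots, v_t)$ of $t$ vertices of $G$ independently and uniformly with repetition, and analyze the (random) common neighborhood $A = \bigcap_{i=1}^t N(v_i)$.

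First I would compute $\mathbb{E}[|A|]$. For each vertex $v \in V(G)$, the event $v \in A$ occurs iff every coordinate of $T$ is a neighbor of $v$, which has probability $(d(v)/n)^t$. Linearity of expectation together with convexity of $x \mapsto x^t$ (i.e.\ the power mean inequality applied to the degree sequence) gives
\[
\mathbb{E}[|A|] \;=\; \sum_{v \in V(G)} \left(\frac{d(v)}{n}\right)^t \;\geq\; n \left(\frac{d}{n}\right)^t \;=\; \frac{d^t}{n^{t-1}}.
\]
Next, call an $r$-subset $S \subseteq V(G)$ \emph{bad} if $|N(S)| < m$, where $N(S)$ denotes the common neighborhood of $S$, and let $Y$ count the number of bad $r$-subsets contained in $A$. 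For a fixed bad $S$, the event $S \subseteq A$ is exactly the event that every $v_i \in T$ lies in $N(S)$, which has probability $(|N(S)|/n)^t < (m/n)^t$. Summing over the at most $\binom{n}{r}$ bad $r$-subsets,
\[
\mathbb{E}[Y] \;<\; \binom{n}{r}\left(\frac{m}{n}\right)^t.
\]

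Combining these two estimates by linearity,
\[
\mathbb{E}\bigl[|A| - Y\bigr] \;\geq\; \frac{d^t}{n^{t-1}} - \binom{n}{r}\left(\frac{m}{n}\right)^t \;\geq\; a,
\]
so there exists an outcome of $T$ for which $|A| - Y \geq a$. Fix such a $T$, and construct $U$ by deleting one vertex from each bad $r$-subset that lies inside the corresponding $A$. The resulting set $U \subseteq A$ has size at least $a$ and contains no bad $r$-subset, so every $r$-subset of $U$ has at least $m$ common neighbors, as required.

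There is no real obstacle in the argument: the only non-trivial step is the appeal to convexity to convert $\sum_v (d(v)/n)^t$ into a bound in terms of the average degree $d$, and the only design choice is the ``delete one bad vertex from each bad subset'' cleanup, which is what forces $U$ to have the claimed co-degree property rather than merely $A$.
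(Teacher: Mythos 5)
Your proof is correct and is exactly the standard dependent random choice argument (sampling $t$ vertices with repetition, bounding $\mathbb{E}[|A|-Y]$, and deleting one vertex per bad $r$-set); the paper itself does not reprove this lemma but cites it from the Fox--Sudakov survey, where the proof given is the same as yours.
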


Conlon, Fox, and Sudakov~\cite{drc-conlon09}, investigating  
the Ramsey numbers of sparse hypergraphs,
extended Lemma~\ref{deprand} to hypergraphs.  The \textbf{weight} $w(S)$ of a set $S$ of hyperedges in
a hypergraph is the number of vertices in the union of these edges.

\newtheorem{hypdeprand}[thmctr]{Lemma}
\begin{hypdeprand} \label{hypdeprand}
(Hypergraph Dependent Random Choice, Lemma 1 in~\cite{drc-conlon09}).
Suppose $s, \Delta$ are positive integers, $\epsilon, \beta > 0$, and
$G_r$ is an $r$-uniform, $r$-partite hypergraph with parts $V_1, \ldots, V_r$, each part having size $N$.  Suppose
$G_r$ has at least $\epsilon N^r$ edges.  Then there exists an $(r-1)$-uniform, $(r-1)$-partite hypergraph $G_{r-1}$
on the vertex set $V_2 \cup \ldots \cup V_r$ which has at least $\frac{1}{2} \epsilon^s N^{r-1}$ edges and such that for each
nonnegative integer $w \leq (r-1)\Delta$, there are at most $4 r \Delta \epsilon^{-s} \beta^s w^{r\Delta} r^w N^w$
dangerous sets of edges of $G_{r-1}$ with weight $w$, where a set $S$ of edges of $G_{r-1}$ is dangerous if
$\left| S \right| \leq \Delta$ and the number of vertices $v \in V_1$ such that for every edge $e \in S$,
$e + v \in G_r$ is less than $\beta N$.
\end{hypdeprand}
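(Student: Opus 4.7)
The plan is to use the standard dependent random choice setup: select a multiset $T = \{v_1, \ldots, v_s\}$ by sampling $s$ vertices of $V_1$ independently and uniformly (with replacement), and take $G_{r-1} = G_{r-1}(T)$ to consist of all $(r-1)$-tuples $e \in V_2 \times \cdots \times V_r$ with $e + v \in G_r$ for every $v \in T$. A first-moment computation will lower bound $|G_{r-1}(T)|$, while a second application of Markov together with a union bound over the weight $w$ will upper bound the number of dangerous sets of $G_{r-1}(T)$.

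For the edge count, I would use that $\Pr[e \in G_{r-1}(T)] = (d(e)/N)^s$, where $d(e) = |\{v \in V_1 : e + v \in G_r\}|$, and then apply Jensen's inequality to the convex function $x \mapsto x^s$, using $\sum_e d(e) = |G_r| \geq \epsilon N^r$, to conclude $\mathbb{E}[|G_{r-1}(T)|] \geq \epsilon^s N^{r-1}$. Since $|G_{r-1}(T)|$ is bounded deterministically by $N^{r-1}$, the elementary inequality $\mu \leq M\cdot\Pr[X \geq \mu/2] + \mu/2$ forces $|G_{r-1}(T)| \geq \tfrac{1}{2}\epsilon^s N^{r-1}$ with probability at least $\epsilon^s/2$.

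To control dangerous sets, I would define $N_1(S) = \{v \in V_1 : e + v \in G_r \text{ for every } e \in S\}$ for any subfamily $S$ of $(r-1)$-tuples. Any $S \subseteq G_{r-1}(T)$ forces $T \subseteq N_1(S)$, so $\Pr[S \subseteq G_{r-1}(T)] = (|N_1(S)|/N)^s < \beta^s$ for dangerous $S$. Crudely upper bounding the number of candidate subfamilies of $V_2 \times \cdots \times V_r$ of weight exactly $w$ and size at most $\Delta$ by $\binom{(r-1)N}{w}$ choices of support times at most $w^{r\Delta}$ subfamilies of the $\leq w^{r-1}$ possible $(r-1)$-tuples on that support yields
\[
\mathbb{E}[Z_w(T)] \leq \beta^s \cdot r^w N^w \cdot w^{r\Delta}.
\]
Markov then gives that $Z_w(T)$ exceeds $4r\Delta\,\epsilon^{-s}\beta^s w^{r\Delta} r^w N^w$ with probability at most $\epsilon^s/(4r\Delta)$, so summing over the at most $(r-1)\Delta$ relevant values of $w \geq 1$ (the $w = 0$ case is vacuous because $N_1(\emptyset) = V_1$) contributes total failure probability strictly below $\epsilon^s/4$.

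Combining the two steps, the total failure probability is at most $(1-\epsilon^s/2) + \epsilon^s/4 < 1$, so a single $T$ must exist which witnesses both conditions simultaneously; fixing such a $T$ and setting $G_{r-1} := G_{r-1}(T)$ finishes the proof. The main obstacle is the combinatorial bookkeeping in the dangerous-set count: one must carefully verify that the compact bound $r^w N^w w^{r\Delta}$ truly dominates the product of the support count $\binom{(r-1)N}{w}$, the $w^{r-1}$ possible ordered $(r-1)$-tuples on a fixed support, and the number of subfamilies of these tuples of size at most $\Delta$, and that the constants thread through cleanly so that the Markov-based union bound strictly beats the $\epsilon^s/2$ success probability from the edge lower bound.
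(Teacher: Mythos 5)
The paper states this lemma without proof, quoting it as Lemma 1 of the cited Conlon--Fox--Sudakov paper, and your argument is precisely the standard dependent-random-choice proof given there: sample $s$ vertices of $V_1$ with repetition, lower-bound $\mathbb{E}\lvert G_{r-1}(T)\rvert$ by convexity, bound the expected number of dangerous sets via $\Pr[S\subseteq G_{r-1}(T)]=(\lvert N_1(S)\rvert/N)^s<\beta^s$, and combine by Markov plus a union bound over $w$. The argument is correct; the only care needed is the bookkeeping you already flag (note that a nonempty $S$ has weight $w\ge r-1$, which is what lets the $(\Delta+1)$-type factor in the subfamily count be absorbed into $w^{r\Delta}$), so there is nothing to add.
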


The main idea of the proof of Theorem~\ref{k3tkthm} is to first apply Lemma~\ref{hypdeprand}
to obtain a graph $G$ and then apply
Lemma~\ref{deprand} to $G$.  Lemma~\ref{deprand} guarantees a set $U$  large enough so that
we can find a hyperedge $E_3$ contained inside $U$.
The vertices of $E_3$  have a large number of common neighborhood in $G$, sufficient  to find a hyperedge $E_2$ among the common neighbors.
Then the hypergraph dependent random choice lemma shows that we can extend the edges of $G$ spanned by $E_2 \cup E_3$ to hyperedges.  We  thus  find the following
hypergraph.
Let $F$ be the $3$-uniform hypergraph with vertices $\left\{ x_1,x_2,x_3,y_1,y_2,y_3,z_1,z_2,z_3 \right\}$ and edges 
$\left\{ x_1x_2x_3,y_1y_2y_3,z_1z_2z_3 \right\} \cup \{ x_iy_jz_k : 1 \leq \linebreak[1] i,j,k \leq 3 \}$.
Note that $F \in \tkf{9}{3}$.
For a 3-uniform hypergraph, the \textbf{codegree} $d(x,y)$ of a pair of vertices $x,y$ is the number of edges $E$ with $x,y \in E$.

\newtheorem{tk6}[thmctr]{Theorem}
\begin{tk6} \label{tk6}
Let $\gamma = \gamma(n)$ be any function going to infinity arbitrarily slowly.
Let $\beta = \beta(n) = 2^{-\gamma (\log n)^{2/3}}$.
There exists a constant $b$ such that if $\mathcal{H}$ is an $n$-vertex, $3$-uniform hypergraph with independence number at most
$\frac{1}{3}\beta n$ and at least $b n^3 2^{-\gamma^3/28} + 144 n^2 = o(n^3)$ edges, then $\mathcal{H}$ contains $F$ and $\tk{6}{3}$.
\end{tk6}

\begin{proof}
Let $N = \frac{n}{3}, \Delta = 9, w = 6, r = 3, c = 4 r \Delta w^{r \Delta} r^w,
s = \frac{w+1}{\gamma} \sqrt[3]{\log n}$, and $\epsilon = 2^{-\gamma^2 \sqrt[3]{\log n}/4}$.  Let $b = 9 c$, so $\mathcal{H}$ has at least
$9 c \epsilon^{1/s} N^3 + 144 n^2$ edges and independence number at most $\frac{1}{3}\beta n$.

For simplicity, assume $3$ divides $n$ and let $\mathcal{H}'$ be a $3$-partite subhypergraph of $\mathcal{H}$ with equal part sizes.  We can always find such a hypergraph $\mathcal{H}'$ with at least $\frac{1}{9}$
of the edges of $\mathcal{H}$.  For each pair $x,y$ of vertices in different parts, delete
all edges containing both $x$ and $y$ if the codegree $d(x,y)$
is at most $16$.  We delete at most $16n^2$ hyperedges.
Thus we have a $3$-partite hypergraph $\mathcal{H}'$ with at least $c \epsilon^{1/s} N^3$ edges and the
codegree of any pair of vertices from different parts is zero or at least $16$.
Let $V_1, V_2, V_3$ be the parts of $\mathcal{H}'$, each part having size $N$.

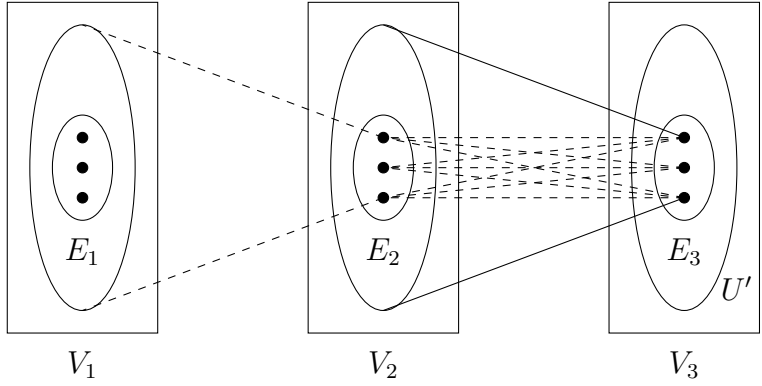
\begin{figure}
\begin{center}
\begin{tikzpicture}
\draw (-1,-2.2) rectangle (1,2.2);
\draw (0,-2.3) node[below] {$V_1$};
\draw (0,0) ellipse (0.7 and 1.9);
\draw (0,0) ellipse (0.4 and 0.7);
\draw (0,-0.8) node[below] {$E_1$};
\draw[fill] (0,0.4) circle (0.07cm);
\draw[fill] (0,0) circle (0.07cm);
\draw[fill] (0,-0.4) circle (0.07cm);
\begin{scope}[xshift=4cm]
\draw (-1,-2.2) rectangle (1,2.2);
\draw (0,-2.3) node[below] {$V_2$};
\draw (0,0) ellipse (0.7 and 1.9);
\draw (0,0) ellipse (0.4 and 0.7);
\draw (0,-0.8) node[below] {$E_2$};
\draw[fill] (0,0.4) circle (0.07cm);
\draw[fill] (0,0) circle (0.07cm);
\draw[fill] (0,-0.4) circle (0.07cm);
\end{scope}
\begin{scope}[xshift=8cm]
\draw (-1,-2.2) rectangle (1,2.2);
\draw (0,-2.3) node[below] {$V_3$};
\draw (0,0) ellipse (0.7 and 1.9);
\draw (0.7,-1.6) node {$U'$};
\draw (0,0) ellipse (0.4 and 0.7);
\draw (0,-0.8) node[below] {$E_3$};
\draw[fill] (0,0.4) circle (0.07cm);
\draw[fill] (0,0) circle (0.07cm);
\draw[fill] (0,-0.4) circle (0.07cm);
\end{scope}
\draw (8,0.4) -- (4,1.9);
\draw (8,-0.4) -- (4,-1.9);
\foreach \a in {-0.4,0,0.4}{
  \foreach \b in {-0.4,0,0.4}{
    \draw[dashed] (8,\a) -- (4,\b);
  }
}
\draw[dashed] (4,0.4) -- (0,1.9);
\draw[dashed] (4,-0.4) -- (0,-1.9);
\end{tikzpicture}
\caption{Embedding $F$ in Theorem~\ref{tk6}.}
\end{center}
\end{figure}

We now apply Lemma~\ref{hypdeprand} to $\mathcal{H}'$ to obtain a graph $G$ on $V_2 \cup V_3$
with at least $\frac{1}{2} \left(c \epsilon^{1/s}\right)^s N^2 \geq 2\epsilon N^2$
edges and at most 
\begin{align*}
4r\Delta\left(c\epsilon^{1/s}\right)^{-s}\beta^s w^{r\Delta}r^w N^w &\leq \epsilon^{-1} \beta^s N^6
\leq 2^{\gamma^2 \sqrt[3]{\log n}/4 - 7 \log n} N^6
\end{align*}
dangerous sets of edges of weight 6. When $n$ is large, the number of dangerous sets is at most $1/2$ so we can assume $G$ has no dangerous sets of weight 6.

We now apply Lemma~\ref{deprand} to $G$ with $t = \frac{4}{\gamma} \sqrt[3]{\log n}$,
$d = 4\epsilon N$, and $a = m = 2\beta N$.  Let $n_1 = \left| V(G) \right| = 2N$.  We check
\begin{align*}
  \frac{d^t}{n_1^{t-1}} - \binom{n_1}{3} \left(\frac{m}{n_1}\right)^t &\geq 4 \epsilon^t N - 2N^3 \beta^t
  \geq 2^{2-\gamma (\log n)^{2/3}} N - 2^{1-4 \log n} N^3 \\
  &\geq 4\beta N - \frac{1}{2} 
  \geq a = m.
\end{align*}
Therefore we have a subset $U$ of $V(G)$ with $\left| U \right| = m = 2\beta N$ such that every three vertices of U
have at least $\beta N$ common neighbors in $G$.  Either $V_2$ or $V_3$ contains at least half of the vertices of $U$,
so assume by symmetry that $U' = U \cap V_3$ has at least $\beta N$ vertices.

The set $U'$ contains a hyperedge $E_3$ of $\mathcal{H}$ since the size of $U'$ is larger than the independence number of $\mathcal{H}$.
The vertices of $E_3$ have at least $\beta N$ common neighbors in $G$, so the common neighbors contain a hyperedge $E_2$.
 By Lemma~\ref{hypdeprand} $G$ is bipartite, so $E_3 \subseteq V_3$ implies that $E_2 \subseteq V_2$.
If we take $S$ to be the nine edges of $G$ spanned by the vertices $E_2 \cup E_3$, then $S$ has weight $6$ so it is not dangerous.
Therefore, we find at least $\beta N$ vertices $v$ in $V_1$
such that $v x y$ is a hyperedge for all $x \in E_2$ and all $y \in E_3$.  These $\beta N$ vertices contain a hyperedge $E_1$ of $\mathcal{H}$.

Let $E_1 = \left\{ x_1,x_2,x_3 \right\}$, $E_2 = \left\{ y_1,y_2,y_3 \right\}$,
and $E_3 = \left\{ z_1,z_2,z_3 \right\}$. These vertices form a copy of $F$ within $\mathcal{H}$.
We also find a copy of $\tk{6}{3}$ with core vertices $x_1,x_2,y_1,y_2,z_1,z_2$.
Vertices $x_1$ and $ x_2$ are contained together in the hyperedge $x_1x_2x_3$.  Since $x_i$ and $y_j$ are contained together in at least one hyperedge of $\mathcal{H}'$,
the codegree of $x_i$ and $y_j$ in $\mathcal{H}$ is at least $16$.  We can therefore find a distinct vertex in $V_3$
which is contained in a hyperedge together with $x_i$ and $y_j$.  The pairs $x_i,z_j$ and $y_i,z_j$ are handled similarly.
\end{proof}

\section{Open problems} \label{secOpen}

There are many open problems remaining in Ramsey-Tur\'{a}n theory.

\begin{itemize}
\item  The exact value of $\rt_3(n,K_s,o(n))$ for small values of $s$ are mostly still unknown.  
Erd\H{o}s, Hajnal, Simonovits, S\'{o}s, and Szemer\'{e}di  \cite{rt-erdos94} proved that $\theta_3(K_s) = \frac{1}{2} \left(1-\frac{3}{s-1}\right) \linebreak[1] $
when $s \equiv 1 \pmod 3$.  
The best bound for $s=5$ is our lower bound of $\frac{1}{64} $ and an upper bound of $\frac{1}{12} n^2$ by 
\cite{rt-erdos94}. For $s = 6$, $\frac{1}{48}\le \theta_3(K_6)  \le \frac{1}{6}$.  In \cite{rt-erdos94}
a construction is given which is conjectured to show $\theta_3(K_6) \geq 1/8$; most likely the construction is correct.
Based on these bounds, the following question is natural. Is there a construction determining $\theta(K_s)$ where the
density of edges between classes is not $2^{-\ell}$ for some integer $\ell$?
 
\item  In the area of the Ramsey-Tur\'{a}n theory, one of the major open problems is to prove a generalization of the Erd\H{o}s-Stone Theorem \cite{rrl-erdos46} by proving that $\theta(H) = \theta(K_s)$
where $s$ is equal to some parameter depending only on $H$.
Let $s$ be the minimum number such that $V(H)$ can be partitioned into $\left\lceil s/2 \right\rceil$ sets $V_1, \ldots, V_{\left\lceil s/2 \right\rceil}$
such that $V_1, \ldots, V_{\left\lfloor s/2 \right\rfloor}$ span forests and if $s$ is odd $V_{\left\lceil s/2 \right\rceil}$ spans an independent set.
In \cite{rt-erdos94} it was proved that  $\theta(H) \leq \theta(K_s)$, where the inequality is sharp for odd $s$.
In several papers, Erd\H{o}s mentioned the  simplest open case when $H = K_{2,2,2}$, where one would like to  know at least if $\theta(K_{2,2,2}) = 0$
(see~\cite[Problem 4]{rt-simonovits01},~\cite[p. 72]{rt-erdos83}, \cite[Problem 1.3]{rt-sudakov03} among others).

\item Can the theorem of Bollob\'as~\cite{beg-bollobas89} can be (partially) saved?  We think that the following version of the Erd\H os Conjecture could be true.
Recall that for $A \subseteq \mathbb{S}^k$ and $t \geq 2$, 
\begin{align*}
d_t(A) = \sup \left\{ \min_{i \neq j} d(x_i, x_j) : x_1, \ldots, x_t \in A \right\}.
\end{align*}

\newtheorem{erdconj}[thmctr]{Conjecture}
\begin{erdconj}\label{erdbol}
For every $t$   positive integer and $\epsilon>0$ there is a $k_0$ such that the following holds: 
For every  $k>k_0$ and measurable $A \subseteq \mathbb{S}^k$   if $C\subseteq \mathbb{S}^k$ is a spherical cap with $\mu(C) = \mu(A)>\epsilon$, then $d_t(A) \geq d_t(C)$.
\end{erdconj}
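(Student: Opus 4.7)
The plan begins with a high-dimensional reduction. Parametrising the cap $C$ by its height $h = 1 - a$, a direct computation (analogous to the one behind Property~\propdiam) gives $d_t(C) = \delta_t \sqrt{1 - a^2}$ where $\delta_t = \sqrt{2t/(t-1)}$, and for any fixed $\mu > 0$ the height $a = a(\mu, k)$ tends to $0$ as $k \to \infty$ by concentration of measure (the cap ``flattens''). Hence the conjecture is morally equivalent to the Ramsey-type assertion: for every $\epsilon, \eta > 0$ there is $k_0$ such that whenever $k \geq k_0$ and $A \subseteq \mathbb{S}^k$ satisfies $\mu(A) > \epsilon$, the set $A$ contains $t$ points whose pairwise distances are all at least $\delta_t - \eta$. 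Since $\delta_t > \sqrt{2}$ for every $t \geq 2$ while the typical pairwise distance of two random points on $\mathbb{S}^k$ concentrates at $\sqrt{2}$, the target simplex is atypical, so concentration alone cannot conclude.

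I would first try induction on $t$. The base case $t = 2$ is exactly Property~\propdiam. For the inductive step, given $x_1, \dots, x_{t-1} \in A$ with pairwise distances at least $\delta_{t-1} - o(1)$, one seeks $x_t \in A$ at distance at least $\delta_t - o(1)$ from every previous vertex. A direct union bound on the forbidden spherical caps is useless because each has only exponentially small measure in $k$, while $\mu(A)$ is merely assumed to be $> \epsilon$. To overcome this I would instead average over $SO(k+1)$: fix a regular $t$-simplex $S \subseteq \mathbb{S}^k$, let $R$ be a uniformly random rotation, and study $\Pr[RS \subseteq A]$. Each vertex of $RS$ is individually uniform on $\mathbb{S}^k$, so the expected number of vertices of $RS$ lying in $A$ equals $t\,\mu(A) \geq t\epsilon$. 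The goal is to show $\Pr[RS \subseteq A]$ is bounded below by a positive quantity depending only on $\epsilon$ and $t$ once $k$ is large, which gives a regular simplex inside $A$, hence $d_t(A) \geq \delta_t \geq d_t(C)$.

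The main obstacle is the rigid correlation structure of $S$: any two vertices of $RS$ have inner product exactly $-1/(t-1)$, many standard deviations below the typical value $0$ of two independent uniform points, which destroys any naive independence approximation. I expect the right tool is a spherical-harmonic expansion of $\mathbf{1}_A$: the operator of averaging a function over the $SO(k+1)$-orbit of the simplex has a discrete, explicitly computable spectrum, and its top eigenspaces correspond to low-degree harmonics. A second-moment computation via this decomposition should yield $\mathrm{Var}(|RS \cap A|) = o(\mathbb{E}[|RS \cap A|]^2)$ whenever $k$ is large compared to $1/\epsilon$, and a Paley--Zygmund argument then concludes. An alternative worth trying in parallel is cap-symmetrisation: replace $A$ by a cap of equal measure and verify that $d_t$ only decreases; the obstruction is precisely that such a monotonicity is not known for $t \geq 3$ and is in fact what Bollob\'{a}s's withdrawn theorem claimed.

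The hypothesis $\mu(A) > \epsilon$ is essential, and the hardest regime is $\epsilon$ close to zero, which is exactly the regime exploited by the Bollob\'{a}s counterexample (a union of two antipodal small caps). Any successful argument must therefore quantitatively show that in high enough dimension a set of measure at least $\epsilon$ cannot ``fragment'' into pieces all of diameter less than $\delta_t - \eta$ without dropping its total measure below $\epsilon$; making this precise is, in my view, the crux of the conjecture and the place where the harmonic-analytic second-moment bound above will be most delicate.
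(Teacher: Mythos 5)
This statement is Conjecture~\ref{erdbol}, which the paper explicitly poses as an open problem and does \emph{not} prove; there is therefore no ``paper's own proof'' to compare against, and your text should be evaluated purely as a research programme rather than as a proof.

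As a programme it contains a genuine gap at exactly the point where a proof would have to do work. You reduce the conjecture to showing that, for $\mu(A)>\epsilon$ and $k$ large, a random rotate $RS$ of a fixed regular $t$-simplex lies entirely inside $A$ with probability bounded away from $0$, and you propose to get this from a second-moment bound $\mathrm{Var}\bigl(|RS\cap A|\bigr)=o\bigl(\mathbb{E}[|RS\cap A|]^2\bigr)$ followed by Paley--Zygmund. But this variance estimate is asserted (``should yield''), not established, and it is in fact the whole content of the conjecture: the vertices of $RS$ are strongly correlated (fixed inner product $-1/(t-1)$), and for structured $A$ --- say a disjoint union of a bounded number of far-apart caps of total measure $\epsilon$ --- the indicator events ``$i$-th vertex of $RS\in A$'' are heavily negatively correlated, so $|RS\cap A|$ concentrates near $\{0,1\}$ and one gets $\mathrm{Var}\approx\mathbb{E}$, which is \emph{not} $o(\mathbb{E}^2)$ for small $\epsilon$. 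One would need to show that high dimension forces such fragmented sets of measure $\geq\epsilon$ to still have large diameter-type parameters, which is precisely the Bollob\'as-type monotonicity that is unknown for $t\geq3$; you acknowledge this at the end but do not resolve it. In addition, the quantitative reduction in your first paragraph (that $d_t(C)=\delta_t\sqrt{1-a^2}$ and that $a\to0$ as $k\to\infty$ at fixed measure) is stated without justification and would need to be made precise and uniform in $\epsilon$ before the Paley--Zygmund step could even be set up. In short: the Paley--Zygmund/averaging strategy is a plausible line of attack and is consistent with the spirit of the paper's discussion of the withdrawn Bollob\'as theorem, but the decisive second-moment inequality is unproven and is in essence equivalent to the conjecture itself.
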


\end{itemize}

\textbf{Acknowledgement.}  The authors would like to thank Dhruv Mubayi for suggesting the study of the 
Ramsey-Tur\'{a}n numbers of $\tk{s}{r}$.  We are also indebted to
B\'{e}la Bollob\'{a}s, Jane Butterfield, Imre Leader, and Wojciech Samotij for helpful discussion and feedback.

\bibliographystyle{amsplain}
\bibliography{refs}

\end{document}